\numberwithin{equation}{section}
\begin{document}
\newtheorem{theorem}{Theorem}[section]
\newtheorem{proposition}[theorem]{Proposition}
\newtheorem{remark}[theorem]{Remark}
\newtheorem{corollary}[theorem]{Corollary}
\newtheorem{definition}{Definition}[section]
\newtheorem{lemma}[theorem]{Lemma}

\title{\large  Measure Upper Bounds of Nodal Sets of Robin Eigenfunctions}
\author{\normalsize Fang Liu \\
\scriptsize Department of Mathematics, School of Science, Nanjing
University of Science and Technology, Nanjing 210094, China
\\\scriptsize Email: sdqdlf78@126.com
\\\normalsize Long Tian\footnote{: Corresponding
author.} \\
\scriptsize Department of Mathematics, School of Science, Nanjing
University of Science
and Technology, Nanjing 210094, China\\
\scriptsize Email: tianlong19850812@163.com\\
\normalsize Xiaoping Yang \\
\scriptsize Department of Mathematics, Nanjing University, Nanjing
210093, China
\\
\scriptsize Email: xpyang@nju.edu.cn
\\
\scriptsize This work is supported by National Natural Science
Foundation of China (No. 11401307, 11501292 and 11531005).}
\date{}
\maketitle
 \fontsize{12}{22}\selectfont\small
 \paragraph{Abstract:} In this paper, we obtain the upper bounds for the Hausdorff measures of nodal sets of eigenfunctions with the Robin boundary conditions, i.e.,
\begin{equation}\nonumber
{\left\{ \begin{array}{l}
 \triangle u+\lambda u=0,\quad in\quad
\Omega,\\
u_{\nu}+\mu u=0,\quad on\quad\partial\Omega,
\end{array} \right.}
\end{equation}
%
  where the domain $\Omega\subseteq\mathbb{R}^n$, $u_{\nu}$ means the derivative of $u$ along the outer normal direction of $\partial\Omega$.
  We show that, if $\Omega$ is bounded and analytic, and the corresponding eigenvalue $\lambda$ is large enough,
  then the measure upper bounds for the nodal sets of eigenfunctions are $C\sqrt{\lambda}$,
  where $C$ is a positive constant depending only on $n$ and $\Omega$ but not on $\mu$.
  We also show that, if $\partial\Omega$ is $C^{\infty}$ smooth and $\partial\Omega\setminus\Gamma$ is piecewise analytic,
  where $\Gamma\subseteq\partial\Omega$ is a union of some $n-2$ dimensional submanifolds of $\partial\Omega$, $\mu>0$, and $\lambda$ is large enough,
  then the corresponding measure upper bounds for the  nodal sets of $u$ are $C(\sqrt{\lambda}+\mu^{\alpha}+\mu^{-c\alpha})$ for some positive number $\alpha$,
  where $c$ is a positive constant depending only on $n$, and $C$ is a positive constant depending on $n$, $\Omega$, $\Gamma$ and $\alpha$.
 \\[10pt]
\emph{Key Words:} Nodal set, Doubling index, Iteration procedure.
\\[10pt]
\emph{Mathematics Subject Classification}(2010): 58E10, 35J30.

\vspace{1cm}\fontsize{12}{22}\selectfont

\section{Introduction}
In this paper, we focus on the following eigenfunctions with Robin boundary condition
\begin{equation}\label{basic equation of eigenfunction}
{\left\{ \begin{array}{l}
 \triangle u+\lambda u=0,\quad in\quad
\Omega,\\
u_{\nu}+\mu u=0,\quad on\quad\partial\Omega,
\end{array} \right.}
\end{equation}
where $\Omega\subseteq\mathbb{R}^n$ is a $C^{\infty}$ bounded
domain, $\mu$ is a constant, $\nu$ is the outer unit normal vector
to $\partial\Omega$, and $\lambda>0$ is the corresponding
eigenvalue. The nodal set of $u$ means the zero level set of $u$,
i.e., the set $\left\{x\in\Omega| u(x)=0\right\}$.

In $\cite{S.T.Yau}$, S.T. Yau conjectured that measures of nodal
sets of the eigenfunctions on compact $C^{\infty}$ Riemannian
manifolds without boundary are comparable to $\sqrt{\lambda}$. In
$\cite{Donnelly and Fefferman}$, H. Donnelly and C. Fefferman proved
Yau's conjecture in the real analytic case. For the non analytic case, in \cite{R.T.Dong},
R.T. Dong showed that an upper bound for the measures of nodal sets
of eigenfunctions for two dimensional case was
$C\lambda^{\frac{3}{4}}$ and proposed an interesting argument to
deal with this problem. A. Logunov in \cite{A.Logunov2} improved the
upper bound for the measures of nodal sets of eigenfunctions
for any dimensional compact Riemannian manifolds to
$C\lambda^\alpha$ for some constant $\alpha
> \frac{1}{2}.$
In 1991, F.H. Lin in $\cite{F.H.Lin}$ investigated the measure
estimates of nodal sets of solutions to uniformly linear
elliptic equations of second order with analytic coefficients by using the the frequency
function which was introduced in \cite{Almgren} in 1979 and also
gave the measure upper bound for the nodal sets of eigenfunctions for the analytic case.
In 1994, Q. Han and F.H. Lin in $\cite{Q.Han F.H.Lin}$
showed upper measure estimates of nodal sets of solutions
to uniformly linear elliptic equations of second order with $C^{1,1}$ coefficients.

The lower bound of the conjecture for two dimensional surfaces was
proven by J.$Br\ddot{u}ning$ in $\cite{J.Bruning}$ and by S.T. Yau
independently.  In the past decades, there are a lot of work
concerning this problem. C.D. Sogge and S. Zelditch in
$\cite{C.D.Sogge and S.Zelditch}$ proved that a lower bound for the
measures of nodal sets of eigenfunctions on compact $C^{\infty}$
Riemannian manifolds was $C\lambda^{(7-3n)/8}$. In
$\cite{T.H.Colding and W.P.Minicozzi}$, T.H. Colding and W.P.
Minicozzi showed that such a lower bound can be improved to be
$C\lambda^{(3-n)/4}$. Recently, A. Logunov in $\cite{A.Logunov2}$
proved the Yau's conjecture of the lower bound. For the
related research work, we refer, for example, $\cite{R.Hardt and L.Simon}$, $\cite{K.Bellova and F.H.Lin}$, $\cite{Long Tian and Xiaoping Yang}$,
$\cite{Long Tian and Xiaoping Yang2}$, $\cite{A.Logunov and E.malinnikova}$, $\cite{Q.Han R.Hardt F.H.Lin}$, $\cite{J.Y.Zhu}$, $\cite{J.E.Chang}$.

In $\cite{I.Kukavica}$, I. Kukavica studied a class of general elliptic linear operator of order 2m and proved that $C\lambda^{\frac{1}{2m}}$ is the
upper measure bound for the nodal set of an eigenfunction $u$ satisfying the boundary condition $B_{j}u=0$ on $\partial\Omega$, where $B_j$ $(j=1, 2, ... , m-1)$ is a linear boundary differential operator,
provided that $\Omega$ is a bounded, analytic domain in
$\mathbb{R}^n$. In fact, the conclusion indicates that a measure upper bound for the nodal set of  an eigenfunction $u$ with the Robin
boudary condition is also $C\lambda^{\frac{1}{2m}}$, but one only knows that the corresponding constant $C$ depends on $n$, $\Omega$ and $\mu$. In $\cite{S.Ariturk}$, S. Ariturk gave some lower
bounds for measures of nodal sets of eigenfunctions on smooth
Riemannian manifolds with Dirichlet or Neumann boundary condition.

 Note that
for the eigenvalue problem $(\ref{basic equation of eigenfunction})$
when $\mu\rightarrow0$, it tends to be the Neumann boundary
condition; and when $\mu\rightarrow\infty$, it tends to be the
Dirichlet boundary condition. Thus one may expect  that the upper
bound for the measure of the  nodal set of the eigenfunction $u$
is also $C\sqrt{\lambda}$, where $C$ is independent of $\mu$,
provided that $\Omega$ is a bounded analytic domain. In this paper,
we will first show that this  is true. More precisely,  we have the
following result.
\begin{theorem}\label{main theorem}
If $\Omega$ is a bounded analytic domain in $\mathbb{R}^n$, then the
upper bound for the Hausdorff measure of the nodal set of an
eigenfunction $u$ is
\begin{equation}
\mathcal{H}^{n-1}\left(\left\{x\in\Omega|u(x)=0\right\}\right)\leq C\sqrt{\lambda},
\end{equation}
where $C$ is a positive constant depending  on $n$ and $\Omega$, but
independent of $\mu$.
\end{theorem}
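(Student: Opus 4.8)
\emph{Strategy and the interior.}
The plan is to estimate the nodal set separately on the interior of $\Omega$ and on a fixed boundary collar, in each case reducing the measure bound to a bound on the doubling index (Almgren frequency) of $u$ by $C\sqrt{\lambda}$, and then invoking the sharp estimate of Donnelly--Fefferman that a solution of an elliptic equation with analytic coefficients and doubling index $\le N$ has nodal set of $\mathcal H^{n-1}$-measure $\le C N$ on compactly contained sets. The whole point is to keep this frequency bound independent of $\mu$. For the interior one does not need analyticity at all: normalizing $\operatorname{diam}\Omega\le 1$ and $\|u\|_{L^2(\Omega)}=1$, the harmonic lift $\tilde u(x,t):=u(x)\cosh(\sqrt{\lambda}\,t)$ on $\Omega\times(-1,1)$ obeys Almgren's monotonicity exactly, and the explicit $t$-growth forces the interior doubling inequality $\int_{B_{2r}(x_0)\cap\Omega}u^2\le e^{C\sqrt{\lambda}}\int_{B_r(x_0)\cap\Omega}u^2$ for balls with $B_{2r}(x_0)\subset\Omega$, i.e.\ frequency $\le C\sqrt{\lambda}$ (this is the argument of F.H.~Lin and of Donnelly--Fefferman, and it involves neither $\mu$ nor analyticity). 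Hence $\mathcal H^{n-1}\bigl(\{u=0\}\cap\{\operatorname{dist}(\cdot,\partial\Omega)\ge\delta_0\}\bigr)\le C(n,\Omega,\delta_0)\sqrt{\lambda}$ for each fixed $\delta_0>0$.

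\emph{The boundary collar --- the main obstacle.}
It remains to bound the nodal set in the collar $\mathcal C:=\{x\in\Omega:\operatorname{dist}(x,\partial\Omega)<\delta_0\}$ by $C\sqrt{\lambda}$ uniformly in $\mu$. Near a point $x_0\in\partial\Omega$ I would flatten $\partial\Omega$ by an analytic diffeomorphism (here analyticity of $\Omega$ is used), so that $\Omega$ becomes $\{x_n>0\}$, $u$ solves $\operatorname{div}(A\nabla u)+\lambda u=0$ with $A$ analytic and $A(0)=I$, and the Robin condition becomes $\partial_{x_n}u=\tilde\mu(x')u$ on $\{x_n=0\}$ with $\tilde\mu\asymp\mu$. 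The substitution $v:=e^{-\mu x_n}u$ turns this into a homogeneous Neumann condition $\partial_{x_n}v=0$ on $\{x_n=0\}$, at the price that $v$ now solves an elliptic equation whose principal part is still analytic but whose first- and zeroth-order coefficients are of size $O(\mu)$ and $O(\mu^2+\lambda)$; the even reflection $\bar v(x',x_n):=v(x',|x_n|)$ is then a weak solution of an elliptic equation with analytic principal part and bounded lower-order coefficients on a two-sided neighbourhood of $\partial\Omega$, to which (after the frequency is controlled) the Donnelly--Fefferman estimate applies. Running the frequency argument for $\bar v$ is straightforward at the scale $\rho\asymp(|\mu|+\sqrt{\lambda})^{-1}$, on which the offending coefficients are $O(1)$: there the frequency of $\bar v$, hence of $u$, is $O(1)$ and the nodal measure of $u$ in such a ball is $O(\rho^{\,n-1})$. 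The genuine difficulty is that when $|\mu|\gg\sqrt{\lambda}$ one is forced into the small scale $\rho\asymp|\mu|^{-1}$ right next to $\partial\Omega$, so $\mathcal C$ must be decomposed dyadically into sub-collars $\{2^k|\mu|^{-1}\le\operatorname{dist}(\cdot,\partial\Omega)<2^{k+1}|\mu|^{-1}\}$, on the $k$-th of which balls of radius $\asymp 2^k|\mu|^{-1}$ are admissible and carry frequency $\le C(1+2^k|\mu|^{-1}\sqrt{\lambda})$; summing the contributions ball by ball would cost a spurious factor $\log|\mu|$. Removing this loss --- that is, showing that only an $O(\sqrt{\lambda}\cdot 2^k|\mu|^{-1})$-fraction of the balls in the $k$-th sub-collar actually meet $\{u=0\}$, so that the sum telescopes to $C\sqrt{\lambda}$ --- is the crux, and I would carry it out by an iteration on the doubling index in the spirit of R.T.~Dong and A.~Logunov, exploiting an almost-additivity of the doubling index over disjoint equal-scale balls together with the eigenvalue equation. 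The outcome is $\mathcal H^{n-1}\bigl(\{u=0\}\cap\mathcal C\bigr)\le C(n,\Omega)\sqrt{\lambda}$, independent of $\mu$.

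\emph{Conclusion.}
Adding the interior and collar estimates yields
$\mathcal H^{n-1}(\{x\in\Omega:u(x)=0\})\le C\sqrt{\lambda}$
with $C=C(n,\Omega)$ not depending on $\mu$. I expect the hard part to be entirely in the second step: establishing the doubling/frequency bound up to $\partial\Omega$ with a $\mu$-independent constant --- in particular the analysis at the forced scale $|\mu|^{-1}$ when $|\mu|$ is large, and the iteration that prevents the dyadic summation over collars from producing a logarithmic-in-$\mu$ loss. By contrast, the interior estimate and the passage from a bounded frequency to a linearly controlled nodal measure (using analyticity of the coefficients of the reflected equation, so that the sharp exponent $\tfrac12$ rather than some $\alpha>\tfrac12$ is available) are by now routine.
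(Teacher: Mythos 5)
Your overall architecture --- reduce the nodal bound to a doubling-index bound of order $\sqrt{\lambda}$ and then apply the integral-geometric/Donnelly--Fefferman estimate for analytic data --- is the same as the paper's, and your interior argument via the harmonic lift is fine. But the proposal does not actually prove the theorem: the one step that carries all the content, namely the $\mu$-independent doubling bound in the boundary collar when $|\mu|\gg\sqrt{\lambda}$, is only announced. You correctly identify that after the substitution $v=e^{-\mu x_n}u$ the reflected equation has lower-order coefficients of size $O(\mu)$ and $O(\mu^2+\lambda)$, so the frequency at unit scale comes out as $O(\sqrt{\lambda}+|\mu|)$ and the naive dyadic summation over sub-collars at scale $|\mu|^{-1}$ loses a $\log|\mu|$; you then write that you ``would'' remove this loss by an iteration on the doubling index ``in the spirit of Dong and Logunov,'' without specifying the iteration, the additivity statement it relies on, or why it closes. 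Since a $C(\sqrt{\lambda}+|\mu|)$ bound was already available from Kukavica's work (with $C$ depending on $\mu$), everything new in the theorem lives exactly in the step you have left as a sketch. As written, the proposal establishes only $\mathcal H^{n-1}(\{u=0\})\le C(\sqrt{\lambda}+|\mu|)$.

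For comparison, the paper avoids the collar decomposition and the scale $|\mu|^{-1}$ entirely. It restricts the eigenvalue equation to $\partial\Omega$ using the Robin condition, obtaining a tangential elliptic equation with zeroth-order coefficient $\lambda+\mu^2+c\mu$, whence $\|D^{\alpha}u\|_{L^{\infty}(\Omega)}\le C^{|\alpha|}(\sqrt{\lambda}+|\mu|)^{|\alpha|+\frac{n+2}{2}}\|u\|_{L^{\infty}(\Omega)}$ and an analytic extension to a fixed $\delta$-neighbourhood with bound $e^{C(\sqrt{\lambda}+|\mu|)}$. The key extra observation is that for $|\mu|\ge C_0(n,\Omega)$ the Robin condition lets one trade the boundary values of each $D^{\alpha}u$ for $|\mu|^{-1}$ times a normal derivative; the resulting factor $C/(|\mu|r)\le\frac12$ absorbs the boundary contribution into the Taylor series, so the extension satisfies $e^{C\sqrt{\lambda}}$ with no $\mu$. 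For $|\mu|\le C_0$ the crude bound already reads $e^{C\sqrt{\lambda}}$ once $\lambda$ is large. This yields a global doubling index $N(x,r)\le C\sqrt{\lambda}$ uniformly in $\mu$ in one stroke, after which the integral-geometric lemma finishes the proof. If you want to salvage your route, you must either supply the missing iteration in full, or replace it with an absorption argument of this kind that exploits $u=-u_{\nu}/\mu$ on $\partial\Omega$ for large $|\mu|$.
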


For the non-analytic case, we get the following conclusion.

\begin{theorem}\label{another main theorem}
Suppose that $\Omega$ is a bounded $C^{\infty}$ domain in
$\mathbb{R}^n$ and $\partial\Omega\setminus\Gamma$ is piecewise
analytic, where $\Gamma\subseteq\partial\Omega$ is a finite union of
some $(n-2)$ dimensional submanifolds. Then if $\lambda$ is large
enough and $\mu>0$, it holds that for any $\alpha\in(0,1)$,
\begin{equation}
\mathcal{H}^{n-1}\left(\left\{x\in\Omega|u(x)=0\right\}\right)\leq
C\left(\sqrt{\lambda}+\mu^{\alpha}+\mu^{-c\alpha}\right),
\end{equation}
where $C$ is a positive constant depending only on $n$, $\Omega$, $\Gamma$ and $\alpha$, and $c$ is a positive constant depending only on $n$.
\end{theorem}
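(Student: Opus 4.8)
The plan is to reduce Theorem~\ref{another main theorem} to the analytic case (Theorem~\ref{main theorem}) by a local analysis near the ``bad set'' $\Gamma$, combined with the standard frequency–function (doubling index) machinery away from $\Gamma$. Away from $\Gamma$, the boundary $\partial\Omega$ is piecewise analytic, so on each analytic piece the Robin eigenfunction $u$ extends by reflection (or by solving an auxiliary analytic Cauchy problem, exactly as in the proof of Theorem~\ref{main theorem}) to a solution of a second–order elliptic equation with analytic coefficients; there the nodal set has $\mathcal{H}^{n-1}$ measure bounded by $C\sqrt{\lambda}$ with $C$ independent of $\mu$, uniformly on compact subsets staying a fixed distance $\delta$ from $\Gamma$. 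The only genuinely new contribution comes from a tubular neighborhood $T_\delta(\Gamma)=\{x:\mathrm{dist}(x,\Gamma)<\delta\}$, whose volume is $\lesssim\delta^2$ since $\Gamma$ has codimension $2$ in $\partial\Omega$ and hence (essentially) codimension $2$ in $\mathbb{R}^n$ near the boundary. So I would cover $\overline\Omega$ by $\sim\delta^{-n}$ balls of radius $\delta$, and split them into those intersecting $T_\delta(\Gamma)$ (there are $\lesssim\delta^{2}\cdot\delta^{-n}=\delta^{-(n-2)}$ of them) and the rest.

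The core estimate is a uniform-in-$\mu$ doubling–index bound on balls of radius $\sim\sqrt\lambda^{-1}$ (or a fixed scale), obtained from a monotone frequency function adapted to the Robin condition. Concretely, after rescaling so that $\lambda=1$ on a ball of radius $r$, one shows the Almgren-type frequency $N(x_0,r)=\frac{r\int_{B_r}(|\nabla u|^2-\lambda u^2)}{\int_{\partial B_r}u^2}$ (modified by boundary terms when $B_r$ meets $\partial\Omega$) is almost monotone, with correction terms controlled by the geometry of $\partial\Omega$ and by $\mu$ through the sign condition $\mu>0$: the Robin boundary integral $\mu\int_{\partial\Omega\cap B_r}u^2$ has a favorable sign, so it can only help monotonicity, which is precisely why $C$ can be taken independent of $\mu$ in the region away from $\Gamma$. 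Near $\Gamma$ this argument fails because $\partial\Omega\setminus\Gamma$ being only piecewise analytic means the frequency can jump across the analytic pieces; there I would instead use a crude bound: on each radius-$\delta$ ball $B$ meeting $T_\delta(\Gamma)$, estimate $\mathcal{H}^{n-1}(\{u=0\}\cap B)$ by $C\delta^{n-1}(1+N(B))$ where $N(B)$ is the (possibly large) local doubling index, and control $N(B)$ by a global bound of the form $N(B)\le C(\sqrt\lambda+\mu^{\beta})$ coming from an $L^\infty$/$L^2$ or elliptic-regularity estimate up to the $C^\infty$ boundary, with the power of $\mu$ arising from trace estimates. Summing, the bad region contributes $\lesssim \delta^{-(n-2)}\cdot\delta^{n-1}(\sqrt\lambda+\mu^\beta)=\delta(\sqrt\lambda+\mu^\beta)$.

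To get the stated bound I would then optimize in $\delta$. The good region contributes $\lesssim\sqrt\lambda$ (uniformly in $\mu$), and the bad region $\lesssim\delta(\sqrt\lambda+\mu^\beta)$, but this by itself does not produce a $\mu^{-c\alpha}$ term — that term must come from the fact that one cannot take $\delta$ arbitrarily small: the reflection/extension across an analytic piece of $\partial\Omega$ has a region of validity whose size degenerates as $\mu\to0$ or $\mu\to\infty$ (the Robin condition $u_\nu=-\mu u$ becomes singular), forcing $\delta\gtrsim\mu^{-c'}$ or a comparable constraint, and similarly the doubling-index bound near $\Gamma$ costs a negative power of $\mu$ when $\mu$ is small because the effective ellipticity of the extended equation degenerates. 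Choosing $\delta=\mu^{-c\alpha}$ and balancing the $\mu^\alpha$ growth from the trace terms against the $\mu^{-c\alpha}$ loss from the shrinking good region yields the bound $C(\sqrt\lambda+\mu^{\alpha}+\mu^{-c\alpha})$, with $\alpha\in(0,1)$ free and $c$ depending only on $n$ through the codimension count and the order of vanishing in the reflection estimates.

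The main obstacle I anticipate is making the near-$\Gamma$ analysis rigorous: one must show that the pieces of $\partial\Omega\setminus\Gamma$ fit together across $\Gamma$ well enough (they share the $C^\infty$ structure of $\partial\Omega$, only the analyticity degenerates on the $(n-2)$-set $\Gamma$) so that the global doubling index does not blow up faster than a fixed power of $\mu$, and to track explicitly how the constant in the analytic-case estimate depends on the distance to $\Gamma$ and on $\mu$. In particular, extending $u$ past an analytic boundary piece while keeping quantitative control of the extension's size on a $\delta$-ball, uniformly as the piece approaches $\Gamma$ and as $\mu$ varies over $(0,\infty)$, is where the real work lies; everything else is bookkeeping with the frequency function and a covering argument.
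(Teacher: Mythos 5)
Your overall architecture (doubling index plus the integral-geometric nodal count away from $\Gamma$, a covering of a neighborhood of $\Gamma$ exploiting its codimension $2$, and a final summation) matches the paper's, but two of your key steps rest on claims that are not available, and the paper's proof is built precisely to avoid them. First, your ``crude bound'' $N(B)\le C(\sqrt{\lambda}+\mu^{\beta})$ on balls meeting the tubular neighborhood $T_\delta(\Gamma)$ is the missing ingredient, not a routine consequence of elliptic regularity up to a $C^\infty$ boundary: near $\Gamma$ there is no analytic extension and no a priori uniform doubling bound, and the paper explicitly expects the doubling index to blow up as the center approaches $\Gamma$. What the paper proves instead (Lemma \ref{upper bound for doubling index near gamma}) is a \emph{degenerate} bound $N(x,\overline{r})\le C\,\overline{r}^{-1/2}\bigl(\sqrt{\lambda}+\mu^{\alpha}+\mu^{-c\alpha}\bigr)$ valid only for $\overline{r}<\mathrm{dist}(x,\Gamma)/2$, obtained by iterating an almost-monotonicity of the Almgren frequency of the harmonic lift $w=u\,e^{\sqrt{\lambda}x_{n+1}}$ along a chain of balls shrinking geometrically toward $\Gamma$; the $\overline{r}^{-1/2}$ loss is then beaten in a \emph{dyadic} (multi-scale) decomposition of $\Gamma(R_0)$, where each shell at scale $R_0 2^{-k}$ needs only $\sim(R_0 2^{-k})^{-(n-2)}$ balls, so the shell contributes $\sim 2^{-k/2}(\sqrt{\lambda}+\mu^{\alpha}+\mu^{-c\alpha})$ and the series converges. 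Your single-scale covering with one $\delta$ optimized at the end cannot reproduce this, because without a uniform doubling bound on the innermost balls there is nothing to optimize.

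Second, your claim that away from $\Gamma$ one gets $C\sqrt{\lambda}$ uniformly in $\mu$ (via a favorable sign of the Robin term in a boundary-adapted frequency function) is stronger than what the paper proves and is not substantiated: the paper's Lemma \ref{upper bound for doubing index away gamma} only yields $C(\sqrt{\lambda}+\mu^{\alpha}+\mu^{-c\alpha})$ even at a fixed distance from $\Gamma$. The actual source of the $\mu^{\alpha}+\mu^{-c\alpha}$ terms is therefore misidentified in your sketch: they do not come from a degenerating region of validity of a reflection, but from the iteration itself. The paper starts the doubling-index propagation at an interior maximum point located outside a boundary collar of width $r_0\sim(\sqrt{\lambda}+\mu)^{-\frac{n+4}{2n}}\sqrt{\mu}/(1+\sqrt{\mu})$ (chosen via the Robin identity $\|u\|_{L^2(\partial\Omega)}^2\le\frac{\lambda}{\mu}\|u\|_{L^2(\Omega)}^2$, which is where $\mu>0$ is used), and then needs $k\sim\ln(R_0/r_0)$ steps of the almost-monotonicity $\bar{N}(x,r)\le(1+\epsilon)\bar{N}(x,2r)+C$ to reach a fixed scale; the accumulated factor $(1+\epsilon)^{k}\sim r_0^{-\log(1+\epsilon)}$, with $\epsilon$ tuned to $\alpha$, is exactly what produces $\mu^{\alpha}+\mu^{-c\alpha}$. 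To repair your argument you would need to either prove your uniform-in-$\mu$ frequency monotonicity up to the boundary (a substantial separate result) or adopt the paper's $r_0$-collar and chain-of-balls iteration, and in either case replace the single-scale treatment of $\Gamma$ by the dyadic one.
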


The rest of this paper is organized as follows. In
section 2, we consider the case that $\partial\Omega$ is analytic. We
first show that the upper bound for the Hausdorff measure of the
nodal set of $u$ is $C(\sqrt{\lambda}+|\mu|)$. Then we show that for
$|\mu|$ large enough, the corresponding upper bound is
$C\sqrt{\lambda}$.  We first extend the eigenfunction $u$ into a
neighborhood of $\Omega$. Then we establish two different estimates
for the extended function $u$ for small and large $|\mu|$
respectively. Finally, we get the upper Hausdorff measure of the
nodal set of $u$ in $\Omega$ by the similar argument in
$\cite{I.Kukavica}$.
In section 3, we focus on the case that
$\partial\Omega$ is not analytic.  By using the iteration argument
developed in $\cite{Long Tian and Xiaoping Yang}$, we first give
the upper bound for the doubling index of $u$ away from the
non-analytic part $\Gamma$, and further control the doubling index
near the non-analytic part $\Gamma$. We would like to point out that
such an upper bound of the doubling index probably go to infinity
when the center of the doubling index tends to $\Gamma$.
Fortunately, with the help of the fact
the dimension of $\Gamma$ is $n-2$,
we can control the upper bound and get the desired result.


\section{The Analytic Case}
In order to get a measure upper bound for the nodal set of $u$, we
first need to extend $u$ into a neighborhood of $\Omega$.

\begin{lemma}\label{extending}
Let $u$ be an eigenfunction of $(\ref{basic equation of
eigenfunction})$
and $\lambda$ is the corresponding eigenvalue. Suppose that
$\partial\Omega$ is analytic and $\lambda$ is large enough. Then $u$
can be analytically  extended into $\Omega_{\delta}$, where
$\Omega_{\delta}$ denotes the $\delta$ neighborhood of $\Omega$ for
some $\delta>0$ depending only on $n$ and $\Omega$, such that
\begin{equation}
\|u\|_{L^{\infty}(\Omega_{\delta})}\leq e^{C(\sqrt{\lambda}+|\mu|)}\|u\|_{L^{\infty}(\Omega)},
\end{equation}
where $C$ is a positive constants depending only on $n$ and $\Omega$.

Moreover, if $|\mu|$ is large enough, i.e., $|\mu|>C_0$ for some
positive constant $C_0$ which depends only on $n$ and $\Omega$, then
it holds that
\begin{equation}
\|u\|_{L^{\infty}(\Omega_{\delta})}\leq e^{C\sqrt{\lambda}}\|u\|_{L^{\infty}(\Omega)}.
\end{equation}
Here $C$ and $\delta$ are also positive constants depending only on
$n$ and $\Omega$.
\end{lemma}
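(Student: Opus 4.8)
The plan is to build the analytic extension of $u$ by solving, across $\partial\Omega$, the elliptic Cauchy problem for $\triangle v+\lambda v=0$ with Cauchy data $\bigl(v|_{\partial\Omega},\,v_\nu|_{\partial\Omega}\bigr)=\bigl(u|_{\partial\Omega},\,-\mu\,u|_{\partial\Omega}\bigr)$ — the second component being dictated by the Robin condition $u_\nu+\mu u=0$ — and then to keep explicit track of how $\lambda$ and $\mu$ enter the sup-norm bound. Since $u$ is already real analytic up to $\overline\Omega$ (elliptic regularity for the Robin problem) and $\|u\|_{L^\infty(\overline\Omega)}=\|u\|_{L^\infty(\Omega)}$, it suffices to extend $u$ across $\partial\Omega$ to a fixed outer collar $\Omega_\delta\setminus\overline\Omega$ with $\delta$ depending only on $n$ and $\Omega$. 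First I would cover $\partial\Omega$ by finitely many analytic charts and, in a collar of each, introduce the signed distance $t$ to $\partial\Omega$, so that $\triangle=\partial_t^2+b(y,t)\partial_t+\triangle_{g(t)}$ with $\triangle_{g(t)}$ the Laplace--Beltrami operator of $\{t=\mathrm{const}\}$ and $b,g$ real analytic with analyticity radius and coefficient bounds depending only on $n$ and $\Omega$.

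In these coordinates $u$ solves the Cauchy problem in $t$
\[
\partial_t^2 u=-b\,\partial_t u-\triangle_{g(t)}u-\lambda u,\qquad u|_{t=0}=\varphi:=u|_{\partial\Omega},\qquad \partial_t u|_{t=0}=-\mu\varphi,
\]
so, writing $u=\sum_{k\ge0}u_k(y)t^k$, one gets $u_0=\varphi$, $u_1=-\mu\varphi$, and for $k\ge2$ a recursion $k(k-1)u_k=P[u_0,\dots,u_{k-1}]-\lambda u_{k-2}$ with $P$ a fixed analytic first/second order tangential operator. I would then run the Cauchy--Kovalevskaya majorant method on this recursion, starting from the boundary analyticity estimate $\|\partial_y^\beta\varphi\|_{L^\infty(\partial\Omega)}\le C\|u\|_{L^\infty(\Omega)}\,|\beta|!\,\bigl(C(\sqrt\lambda+|\mu|+1)\bigr)^{|\beta|}$; this yields an analytic extension on $\{|t|<R\}$ with $R^{-1}\le C(\sqrt\lambda+|\mu|+1)$ and $|u|\le C\|u\|_{L^\infty(\Omega)}$ there. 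Chaining these local expansions outward over the fixed distance $\delta$ — i.e. iterating $\sim\delta(\sqrt\lambda+|\mu|)$ times, each step multiplying the sup norm by a fixed constant — produces $\|u\|_{L^\infty(\Omega_\delta)}\le e^{C(\sqrt\lambda+|\mu|)}\|u\|_{L^\infty(\Omega)}$, the first assertion. The boundary analyticity estimate itself I would obtain by the Morrey--Nirenberg analytic bootstrap for the Robin problem $\triangle u=-\lambda u$, $u_\nu+\mu u=0$, in which $\lambda$ enters as a weight through the equation and $\mu$ through the boundary operator and through $u_1=-\mu\varphi$.

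For the refinement when $|\mu|>C_0$, the idea is that a large $\mu$ forces $u$ to be almost Dirichlet, so the bad $\mu$-factors cancel. Concretely I would first prove the quantitative ``almost Dirichlet'' estimate $\|\partial_y^\beta\varphi\|_{L^\infty(\partial\Omega)}\le C\sqrt\lambda\,|\mu|^{-1}\,\|u\|_{L^\infty(\Omega)}\,|\beta|!\,(C\sqrt\lambda)^{|\beta|}$, valid for all $\beta$ once $|\mu|$ exceeds a constant depending only on $n$ and $\Omega$; then the effective Neumann data $u_\nu=-\mu\varphi$ and all its tangential derivatives are bounded by $C\|u\|_{L^\infty(\Omega)}|\beta|!(C\sqrt\lambda)^{|\beta|}$ with constants independent of $\mu$. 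Feeding this $\mu$-free Cauchy data into the same recursion and chaining now over balls of the $\mu$-free analyticity scale $\sim1/\sqrt\lambda$ yields $\|u\|_{L^\infty(\Omega_\delta)}\le e^{C\sqrt\lambda}\|u\|_{L^\infty(\Omega)}$.

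The hard part will be the quantitative boundary analyticity estimates for $\varphi=u|_{\partial\Omega}$ with their explicit dependence on $\lambda$ and $\mu$: away from the boundary this is classical, but near a Robin boundary one must carry out the analytic bootstrap up to $\partial\Omega$ while tracking $\lambda$ (from $\triangle u=-\lambda u$) and $\mu$ (from the boundary operator) as explicit weights, and — for the large-$|\mu|$ case — establish the ``almost Dirichlet'' bound $\|u\|_{L^\infty(\partial\Omega)}\le C\sqrt\lambda\,|\mu|^{-1}\|u\|_{L^\infty(\Omega)}$ that removes $\mu$ from the Cauchy data. Everything else — the choice of charts, the normal-coordinate reduction, and the Cauchy--Kovalevskaya/chaining estimate — is routine once these inputs are in place.
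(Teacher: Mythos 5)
Your argument for the first bound is essentially the paper's: both rest on a quantitative analyticity estimate for $u$ up to $\partial\Omega$ with derivative growth $\bigl(C(\sqrt{\lambda}+|\mu|)\bigr)^{|\alpha|}$, followed by a power-series extension across the boundary. (The paper obtains the boundary estimate by showing that $u|_{\partial\Omega}$ satisfies a tangential elliptic equation $\triangle_{\partial\Omega}u+\langle\vec{b},\nabla_{\partial\Omega}u\rangle+(\lambda+\mu^2+c\mu)u=0$ on the closed manifold $\partial\Omega$ and applying elliptic estimates there, and then extends by a single Taylor expansion rather than by chaining Cauchy--Kovalevskaya steps; these are packaging differences, not substantive ones.)

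The gap is in the large-$|\mu|$ case. Your refinement hangs entirely on the ``almost Dirichlet'' estimate $\|\partial_y^{\beta}\varphi\|_{L^{\infty}(\partial\Omega)}\le C\sqrt{\lambda}\,|\mu|^{-1}\|u\|_{L^{\infty}(\Omega)}\,|\beta|!\,(C\sqrt{\lambda})^{|\beta|}$, which you assert but do not derive, and which is precisely the crux of the lemma. The natural boundary estimate --- the one your own first step produces, and the one the tangential equation yields, since its zeroth-order coefficient is $\lambda+\mu^2+c\mu$ --- has derivative growth $\bigl(C(\sqrt{\lambda}+|\mu|)\bigr)^{|\beta|}$, not $(C\sqrt{\lambda})^{|\beta|}$. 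The order-zero smallness $\|\varphi\|_{L^{\infty}}\le C\sqrt{\lambda}\,|\mu|^{-1}\|u\|_{L^{\infty}(\Omega)}$ (which itself requires a $\mu$-independent gradient bound up to the Robin boundary, a circularity you would also need to break) does not propagate to higher derivatives at the improved rate: a boundary trace of size $\varepsilon$ oscillating at the natural frequency $\sqrt{\lambda+\mu^2}$ has $k$-th tangential derivatives of size $\varepsilon(\lambda+\mu^2)^{k/2}$, so multiplying by $\mu$ reintroduces $|\mu|^{|\beta|}$ into the effective Neumann data. As written, the second half reduces the lemma to an unproved claim at least as hard as the lemma itself. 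The paper's mechanism for removing $\mu$ is different and is the missing ingredient: it uses the Robin condition to trade the boundary value of each derivative for $|\mu|^{-1}$ times its normal derivative inside the full Taylor sum, and then absorbs the resulting term, whose prefactor is $\frac{C}{|\mu|r}$, into the left-hand side once $|\mu|\ge C/\delta$. Some such self-improvement or absorption step (or a genuine proof of the $\mu$-free derivative bounds) is needed to close your argument.
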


\begin{proof}
By the standard elliptic estimates, we first know that
\begin{equation*}
\|u\|_{W^{k,2}(\Omega)}\leq C^k\left(\lambda^{\frac{k}{2}}\|u\|_{L^2(\Omega)}+\|u\|_{W^{k,2}(\partial\Omega)}\right).
\end{equation*}
Now we only need to consider the estimate of
$\|u\|_{W^{k,2}(\partial\Omega)}$.

For any fixed point $x_0\in\partial\Omega$, we make a suitable
transformation, such that $x_0=0$, $\nu$ is the opposite direction
of the axis $x_n$, and the hyperplane $x_n=0$ is the tangent plane
of $\partial\Omega$ at $x_0$.
For any point $x$ near the origin point $x_0$, one may assume that $u_{x_n}=\overrightarrow{\gamma}\cdot u_{\tau}+\beta u_{\nu}$, where $\tau$ is the tangent vector field on $\partial\Omega$, $\overrightarrow{\gamma}$ is an $n-1$ dimensional vector valued function. Because $\overrightarrow{\gamma}$ and $\beta$ both depend only on $\Omega$, they are also analytic functions. So
\begin{eqnarray*}
u_{x_nx_n}&=&\overrightarrow{\gamma}\cdot(u_{x_n})_{\tau}+\beta(u_{x_n})_{\nu}
\\&=&\overrightarrow{\gamma}\cdot(\overrightarrow{\gamma}_{\tau}\cdot u_{\tau})+\overrightarrow{\gamma}\cdot(\overrightarrow{\gamma}\cdot u_{\tau\tau})+\overrightarrow{\gamma}\cdot\beta_{\tau}u_{\nu}+2\beta\overrightarrow{\gamma}\cdot u_{\nu\tau}
\\&+&\beta\overrightarrow{\gamma}_{\nu}\cdot u_{\tau}+\beta\beta_{\nu}u_{\nu}+\beta^2u_{\nu\nu}.
\end{eqnarray*}
It is also obvious that $\overrightarrow{\gamma}(0)=0$ and $\beta(0)=-1$. Then from the Robin boundary condition, we have
\begin{equation*}
u_{x_nx_n}(0)=\mu^2u(0)-\mu\beta_{\nu}(0)u(0)-\overrightarrow{\gamma}(0)u_{\tau}(0).
\end{equation*}
From $\cite{I.Kukavica}$, or the standard elliptic estimate, we know that $u$ is analytic in $\overline{\Omega}$. Thus from $u$ satisfies the equation $\triangle u+\lambda u=0$ in $\Omega$, we have that $u$ satisfies the same equation on $\overline{\Omega}$ and thus it holds that $\triangle u(0)+\lambda u(0)=0$. So from the above calculation, we obtain that
\begin{equation*}
u_{x_1x_1}+u_{x_2x_2}+\cdots+u_{x_{n-1}x_{n-1}}-\overrightarrow{\gamma}u_{\tau}
+(\lambda+\mu^2-\beta_{\nu}\mu)u=0
\end{equation*}
holds on the origin point $x_0$.
Because $x_0$ is an arbitrary point on $\partial\Omega$, the equation
$\triangle u+\lambda u=0$ becomes
\begin{equation}\label{eigen equation on boundary}
\triangle_{\partial\Omega}u+<\vec{b},\nabla_{\partial\Omega}u>+(\lambda+\mu^2+c\mu)u=0
\end{equation}
on the submanifold $\partial\Omega$. Here $\vec{b}$ and $c$
are coefficient functions depending only on $n$ and $\Omega$, $\nabla_{\partial\Omega}$ is the gradient operator on the submanifold $\partial\Omega$, $<,>$ is the inner product on the submainfold $\partial\Omega$, and $\triangle_{\partial\Omega}$ is the
Laplacian on the submanifold $\partial\Omega$.
Because $\partial\Omega$ is an $n-1$ dimensional analytic compact
manifold, the coefficient functions $\vec{b}$ and $c$ both
are analytic on $\partial\Omega$. Thus by the standard elliptic
estimates on Riemannian manifolds, we have the following estimate:
\begin{equation*}
\|u\|_{W^{k,2}(\partial\Omega)}\leq C^k(\lambda+\mu^2)^{\frac{k}{2}}\|u\|_{L^2(\partial\Omega)}.
\end{equation*}
Thus
\begin{eqnarray*}
\|u\|_{W^{k,2}(\Omega)}&\leq&C^k(\sqrt{\lambda}+|\mu|)^k(\|u\|_{L^2(\Omega)}+\|u\|_{L^2(\partial\Omega)})
\\&\leq&C^k(\sqrt{\lambda}+|\mu|)^k\|u\|_{L^{\infty}(\Omega)}.
\end{eqnarray*}
By the Sobolev Embedding Theorem, we have that
\begin{equation*}
||D^{\alpha}u||_{L^{\infty}(\Omega)}\leq
C^{|\alpha|+\frac{n+2}{2}}(\sqrt{\lambda}+|\mu|)^{|\alpha|+\frac{n+2}{2}}\|u\|_{L^{\infty}(\Omega)}.
\end{equation*}
Thus we can extend $u$ into a neighborhood of $\Omega$ by the Taylor
series, noted by $\Omega_{\delta}$, such that
\begin{eqnarray*}
\|u\|_{L^{\infty}(\Omega_{\delta})}&\leq&
\|u\|_{L^{\infty}(\Omega)}+
\sum\limits_{k=1}^{\infty}\sum\limits_{|\alpha|=k}\frac{\delta^k}{\alpha!}\|D^{\alpha}u\|_{L^{\infty}(\Omega)}
\\&\leq&\|u\|_{L^{\infty}(\Omega)}\left(1+\sum\limits_{k=1}^{\infty}\sum\limits_{|\alpha|=k}
\frac{(C\delta)^k(\sqrt{\lambda}+|\mu|)^{k+\frac{n+2}{2}}}{\alpha!}\right)
\\&\leq&e^{C(\sqrt{\lambda}+|\mu|)}\|u\|_{L^{\infty}(\Omega)}
\end{eqnarray*}
for $\lambda$ large enough. That is the desired result.

On the other hand, by the standard elliptic estimate, the Sobolev
Embedding Theorem, and the Robin boundary condition, it also
holds that
\begin{eqnarray*}
\|D^{\alpha}u\|_{L^{\infty}(\Omega)}
&\leq&C^{|\alpha|+\frac{n+2}{2}}\left(\lambda^{\frac{|\alpha|}{2}+
\frac{n+2}{4}}\|u\|_{L^2(\Omega)}+
\|D^{\alpha}u\|_{L^{\infty}(\partial\Omega)}\right)
\\&\leq&
C^{|\alpha|+\frac{n+2}{2}}\left(\lambda^{\frac{|\alpha|}{2}+
\frac{n+2}{4}}\|u\|_{L^2(\Omega)}+
\frac{1}{|\mu|}\|(D^{\alpha}u)_{\nu}\|_{L^{\infty}(\partial\Omega)}\right).
\end{eqnarray*}
Thus for any $x_0\in\partial\Omega$ and $x\in \partial B_r(x_0)$
with $r\leq\delta$, it holds that
\begin{eqnarray*}
\sum\limits_{k=0}^{\infty}\sum\limits_{|\alpha|=k}\frac{(x-x_0)^{\alpha}}{\alpha!}|D^{\alpha}u(x_0)|
&\leq&\sum\limits_{k=0}^{\infty}\sum\limits_{|\alpha|=k}\frac{C^kr^k}{\alpha!}\|D^{\alpha}u\|_{L^{\infty}(\Omega)}
\\&\leq&\sum\limits_{k=0}^{\infty}\sum\limits_{|\alpha|=k}
\frac{C^kr^k}{\alpha!}\left(\lambda^{\frac{k}{2}+\frac{n+2}{4}}\|u\|_{L^2(\Omega)}
+\frac{1}{|\mu|}\|(D^{\alpha}u)_{\nu}\|_{L^{\infty}(\partial\Omega)}\right)
\\&\leq&\sum\limits_{k=0}^{\infty}\sum\limits_{|\alpha|=k}
\frac{C^kr^k}{\alpha!}\left(\lambda^{\frac{k}{2}+\frac{n+2}{4}}\|u\|_{L^{\infty}(\Omega)}
+\frac{1}{|\mu|}\|(D^{\alpha}u)_{\nu}\|_{L^{\infty}(\Omega)}\right)
\\&\leq&e^{C\sqrt{\lambda}r}\|u\|_{L^{\infty}(\Omega)}
+\frac{C}{|\mu|r}\sum\limits_{k=0}^{\infty}\sum\limits_{|\alpha|=k}
\frac{C^kr^k}{\alpha!}\|D^{\alpha}u\|_{L^{\infty}(\Omega)}.
\end{eqnarray*}
In the above inequalities, $C$ may be different from line to line.

Noticing that for $|\mu|$ large enough, the coefficient
$\frac{C}{|\mu|r}$ on the second term of the last inequality can be
controlled by $\frac{1}{2}$, then the
following estimate holds for any $x\in\partial B_r(x_0)$,
\begin{eqnarray*}
|u(x)|&\leq&\sum\limits_{k=0}^{\infty}\sum\limits_{|\alpha|=k}
\frac{C^kr^k}{\alpha!}\|D^{\alpha}u\|_{L^{\infty}(\Omega)}
\\&\leq&e^{C\sqrt{\lambda}r}\|u\|_{L^{\infty}(\Omega)}.
\end{eqnarray*}
Thus by the standard elliptic estimates, we know that for any $x\in
B_r(x_0)$, there holds
\begin{equation*}
|u(x)|\leq e^{C\sqrt{\lambda}r}\|u\|_{L^{\infty}(\Omega)}.
\end{equation*}
Put $r=\delta$, we can get the desired result, provided that $|\mu|\geq\frac{C}{\delta}$.
\end{proof}

Now we adopt the quantity $N(x_0,r)$ as follows:
\begin{equation}
N(x_0,r)=\log_2\frac{\max\limits_{x\in B_r(x_0)}|u(x)|}{\max\limits_{x\in B_{\frac{r}{2}}(x_0)}|u(x)|}.
\end{equation}
It is called in $\cite{A.Logunov}$ the doubling index of $u$
centered at $x_0$ with radius $r$.

We give the upper bound for the doubling index as follows.

\begin{lemma}\label{upper bound for the doubling index}
Let $u$ be an eigenfunction in $\Omega$ and $\lambda$ be the corresponding eigenvalue. Then it holds that
\begin{equation}
N(x,r)\leq C(\sqrt{\lambda}+|\mu|),
\end{equation}
with $x\in\Omega$ and $r\leq\delta$, $\delta$ is a positive constant as in Lemma $\ref{extending}$, and $C$ is a positive constant depending only on $n$ and $\Omega$.

Moreover, if $|\mu|$ is large enough, i.e., $|\mu|\geq M$ for some positive constant $M$ depending only on $n$ and $\Omega$, then
\begin{equation}
N(x,r)\leq C\sqrt{\lambda},\quad\forall x\in\Omega,\quad\forall r\leq\delta,
\end{equation}
where $C$ is a positive constant depending only on $n$ and $\Omega$.
\end{lemma}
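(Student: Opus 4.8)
The plan is to deduce the doubling index bound directly from the $L^\infty$ growth estimate for the analytic extension in Lemma~\ref{extending}, together with a standard three-ball / propagation-of-smallness inequality. First I would observe that since $u$ extends analytically to $\Omega_\delta$ with $\|u\|_{L^\infty(\Omega_\delta)}\le e^{C(\sqrt\lambda+|\mu|)}\|u\|_{L^\infty(\Omega)}$, the extended function solves $\triangle u+\lambda u=0$ on all of $\Omega_\delta$ (by analyticity, the equation persists). Then $v(x)=e^{a\cdot x}u(x)$ with $|a|^2=\lambda$ — or, more simply, the function $w$ on $\Omega_\delta\times[-L,L]\subset\mathbb R^{n+1}$ given by $w(x,t)=u(x)e^{\sqrt\lambda\, t}$ — is harmonic, so one can invoke the monotonicity of the frequency function (or Almgren's frequency) for harmonic functions, for which doubling indices at all scales $r\le\delta$ are controlled by the doubling index at the top scale. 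This reduces everything to bounding $N(x_0,\delta)$ for a fixed large scale.

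Next I would bound $N(x_0,\delta)$ from above using the extension estimate. For any $x_0\in\Omega$ and $r\le\delta$, the numerator $\max_{B_r(x_0)}|u|$ is at most $\|u\|_{L^\infty(\Omega_\delta)}\le e^{C(\sqrt\lambda+|\mu|)}\|u\|_{L^\infty(\Omega)}$. For the denominator, one needs a lower bound of the form $\max_{B_{r/2}(x_0)}|u|\ge c\|u\|_{L^\infty(\Omega)}$ at \emph{some} controlled scale; this is where a quantitative unique continuation / three-balls inequality for solutions of $\triangle u+\lambda u=0$ on $\Omega_\delta$ enters, giving $\max_{B_{\delta}(x_0)}|u|\ge e^{-C\sqrt\lambda}\max_{\Omega}|u|$ (the $\sqrt\lambda$ here coming from the standard Carleman estimate for this operator, as in Donnelly–Fefferman / Kukavica). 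Combining numerator and denominator yields $N(x_0,\delta)\le C(\sqrt\lambda+|\mu|)$, and then the frequency monotonicity propagates this to all $r\le\delta$, giving the first inequality.

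For the second, stronger statement, I would run the identical argument but replace the first extension estimate by the second one in Lemma~\ref{extending}: when $|\mu|\ge C_0$ we have $\|u\|_{L^\infty(\Omega_\delta)}\le e^{C\sqrt\lambda}\|u\|_{L^\infty(\Omega)}$, with no $|\mu|$ in the exponent. The numerator bound then becomes $e^{C\sqrt\lambda}\|u\|_{L^\infty(\Omega)}$, the denominator lower bound $e^{-C\sqrt\lambda}\|u\|_{L^\infty(\Omega)}$ is unchanged (the three-balls inequality does not see $\mu$ at all, only the operator $\triangle+\lambda$), so $N(x_0,\delta)\le C\sqrt\lambda$, and again monotonicity of the frequency carries this down to all scales $r\le\delta$. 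Taking $M=C_0$ finishes the proof.

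The main obstacle, and the step I would be most careful about, is the lower bound for the denominator: one must ensure the quantitative unique continuation constant is of the form $e^{-C\sqrt\lambda}$ with $C$ depending only on $n$ and $\Omega$ and \emph{not} on $\mu$ — which is plausible precisely because the PDE $\triangle u+\lambda u=0$ involves no $\mu$, the boundary condition having been used up already in the extension step. A secondary point is making the frequency-function / three-balls propagation rigorous for the reduced harmonic function $w(x,t)=u(x)e^{\sqrt\lambda t}$ on a product domain, checking that the doubling index of $u$ in $x$ is comparable to that of $w$ in $(x,t)$ up to the harmless factor $e^{C\sqrt\lambda r}$; this is routine but is where the $\sqrt\lambda$ (as opposed to $\lambda$) ultimately comes from, so it should be stated carefully.
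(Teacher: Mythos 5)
Your overall architecture matches the paper's: bound the numerator of $N(x,r)$ by the extension estimate of Lemma \ref{extending}, bound the denominator from below by propagating positivity from the interior maximum point, and pass from the top scale $\delta$ down to all $r\le\delta$ via (almost) monotonicity — the paper does this last step by citing the almost-monotonicity of the doubling index, which is the same device as your frequency argument for the harmonic lift $w(x,t)=u(x)e^{\sqrt{\lambda}t}$. The one substantive difference is how the denominator is handled. The paper invokes no Carleman or three-balls inequality: it starts at the maximum point $\overline{x}$ of $u$ on $\overline{\Omega}$, where $\max_{B_{r/2}(\overline{x})}|u|\ge|u(\overline{x})|=\|u\|_{L^{\infty}(\Omega)}$ is free, and then reaches an arbitrary $x\in\Omega$ by a chain of boundedly many overlapping balls of radius comparable to $\delta$, losing a factor $2^{-N}$ at each step; this is entirely elementary and uses only the $L^{\infty}(\Omega_{\delta})$ bound. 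Your route through quantitative unique continuation also works, but the claim that the lower bound $\max_{B_{\delta}(x_0)}|u|\ge e^{-C\sqrt{\lambda}}\max_{\Omega}|u|$ holds with $C$ independent of $\mu$ ``because the PDE does not see $\mu$'' is not quite right: propagating smallness across $\Omega$ requires an a priori sup bound on the enlarged domain along the chain, and that bound is $e^{C(\sqrt{\lambda}+|\mu|)}\|u\|_{L^{\infty}(\Omega)}$ in general, so for general $\mu$ your denominator estimate degrades to $e^{-C(\sqrt{\lambda}+|\mu|)}\max_{\Omega}|u|$. This is harmless for the first inequality of the lemma, and in the regime $|\mu|\ge M$ the improved extension estimate of Lemma \ref{extending} restores the $\mu$-free constant — that is, the $\mu$-independence comes from the extension step, not from the interior equation. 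With that correction your argument closes in both cases and is essentially equivalent to the paper's.
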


\begin{proof}
Let $\overline{x}$ be the maximum point of $u$ at $\overline{\Omega}$. Then for $r\leq\delta$,
\begin{equation}
\begin{array}{l}
\|u\|_{L^{\infty}(B_r(\overline{x}))} \leq
\|u\|_{L^{\infty}(\Omega_{\delta})}
\\ \quad\quad\quad\quad\;\ \leq e^{C(\sqrt{\lambda}+|\mu|)}\|u\|_{L^{\infty}(\Omega)}
\\ \quad\quad\quad\quad\;\ \leq e^{C(\sqrt{\lambda}+|\mu|)}|u(\overline{x})|
\\ \quad\quad\quad\quad\;\ \leq e^{C(\sqrt{\lambda}+|\mu|)}\|u\|_{L^{\infty}(B_{r/2}(\overline{x}))}.
\end{array}
\end{equation}
Thus by the definition of the doubling index, we have
\begin{equation*}
N(\bar{x},r)\leq C(\sqrt{\lambda}+|\mu|)
\end{equation*}
for any $r\leq\delta$. Noting that for any $x_1\in
B_{r/4}(\overline{x})$,
\begin{equation*}
\|u\|_{L^{\infty}(B_{r/2}(x_1))}\geq\|u\|_{L^{\infty}(B_{r/4}(\overline{x}))}\geq\|u\|_{L^{\infty}(\Omega)},
\end{equation*}
and
\begin{equation*}
\|u\|_{L^{\infty}(B_r(\overline{x_1}))}\leq e^{C(\sqrt{\lambda}+|\mu|)}\|u\|_{L^{\infty}(\Omega)},
\end{equation*}
we have
\begin{equation*}
N(x_1,r)\leq C(\sqrt{\lambda}+|\mu|)
\end{equation*}
 for any $r\leq\delta$ and $x_1\in B_{r/4}(\overline{x})$. For
any $x$ in $\Omega$, taking $r=\delta$, we may use the above
arguments for at most $k$ times, where $k$ is a positive constant
depending only on $n$ and $\Omega$, to get that the upper bound for
the doubling index is
\begin{equation*}
N(x,\delta)\leq C(\sqrt{\lambda}+|\mu|).
\end{equation*}
For the radius $r<\delta$, we can use the almost monotonicity
formula, which is stated in $\cite{A.Logunov}$, and is also
implicitly stated in $\cite{Q.Han and F.H.Lin}$, to get the desired
estimate.

If $|\mu|$ is large enough, by Lemma $\ref{extending}$, the above
arguments also hold only if   the quantity $\sqrt{\lambda}+|\mu|$ is
 replaced  by $\sqrt{\lambda}. $
\end{proof}

\begin{remark}
In fact, the above lemma tells us that, for any $x\in\Omega$ and
$r\leq\delta$, it holds that
\begin{equation}
N(x,r)\leq C\sqrt{\lambda},
\end{equation}
where $C$ is a positive constant depending only on $n$ and $\Omega$.
\end{remark}

Before we give the proof of Theorem $\ref{main theorem}$, we state
the following lemma, which is stated in $\cite{Q.Han and
F.H.Lin}$, see also $\cite{A.Logunov,Donnelly and Fefferman}$.
For the completeness, we also give a sketch of its proof.

\begin{lemma}\label{nodal estimate in small ball}
Let $u$ be an analytic function in $B_r(x_0)$. Then it holds that
\begin{equation}
\mathcal{H}^{n-1}\left(\left\{x\in B_{r/16}(x_0)|u(x)=0\right\}\right)\leq CNr^{n-1},
\end{equation}
where $N=\max\left\{N(x,\rho),x\in B_{r/2}(x_0),\rho\leq r/2\right\}$, and $C$ is a positive constant depending only on $n$.
\end{lemma}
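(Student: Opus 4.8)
The plan is to combine an integral-geometric (Cauchy--Crofton) identity with a one-dimensional zero-counting estimate fed by the doubling index. We may assume $u\not\equiv0$, so the nodal set $Z_u=\{u=0\}$ is a nontrivial real-analytic variety; in particular $Z_u\cap B_{r/16}(x_0)$ is $(n-1)$-rectifiable with locally finite $\mathcal H^{n-1}$-measure (the set $\{u=0,\ \nabla u=0\}$ has Hausdorff dimension at most $n-2$), so the integral-geometric formula for rectifiable sets applies and gives
\begin{equation}\nonumber
\mathcal{H}^{n-1}\left(Z_u\cap B_{r/16}(x_0)\right)\le C_n\int_{S^{n-1}}\int_{\omega^{\perp}}\#\left(Z_u\cap B_{r/16}(x_0)\cap\ell_{y,\omega}\right)\,dy\,d\omega,
\end{equation}
where $\ell_{y,\omega}=\{y+t\omega:\ t\in\mathbb{R}\}$ with $y\in\omega^{\perp}$. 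Since the base $\{y\in\omega^{\perp}:\ \ell_{y,\omega}\cap B_{r/16}(x_0)\neq\emptyset\}$ has $(n-1)$-measure at most $C_n r^{n-1}$ and $S^{n-1}$ is compact, it suffices to prove the pointwise bound $\#\left(Z_u\cap\ell\cap B_{r/16}(x_0)\right)\le CN$ for every line $\ell$ meeting $B_{r/16}(x_0)$ (and $u\not\equiv0$ on $\ell$, which holds for almost every $\ell$ in the Crofton measure).

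For such a line $\ell$, parametrize it and set $h(t)=u(p+t\omega)$, a real-analytic function of one variable; it suffices to bound the number of zeros of $h$ on the subinterval $I$ (of length $<r/8$) that corresponds to $\ell\cap B_{r/16}(x_0)$. I would pass to the complexification: $h$ extends to a holomorphic function $\tilde h$ on a complex neighborhood of $I$. By Jensen's formula, the number of zeros of $\tilde h$ in a complex disk $D(\zeta_0,\rho)$ contained in the domain of $\tilde h$ is bounded by an absolute multiple of the \emph{complex doubling index} $\log_2\big(\max_{|\zeta-\zeta_0|\le 2\rho}|\tilde h|\,/\,\max_{|\zeta-\zeta_0|\le\rho}|\tilde h|\big)$. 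The crucial point is then to control this complex doubling index by $N$: since $u$ is analytic in $B_r(x_0)$, one uses its Taylor expansion together with the hypothesis $N(x,\rho)\le N$ for all $x\in B_{r/2}(x_0)$ and all $\rho\le r/2$, iterated across dyadic scales, to obtain a growth bound $\max_{|\zeta-\zeta_0|\le 2\rho}|\tilde h|\le 2^{CN}\max_{|\zeta-\zeta_0|\le\rho}|\tilde h|$ on complex disks of radius $\lesssim r$ centered near $I$. Hence each such disk contains at most $CN$ zeros of $\tilde h$, and covering $I$ by a controlled number of them gives $\#\left(Z_u\cap\ell\cap B_{r/16}(x_0)\right)\le CN$. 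Inserting this into the Crofton inequality above yields $\mathcal H^{n-1}(Z_u\cap B_{r/16}(x_0))\le C_n\cdot CN\cdot r^{n-1}$, which is the assertion.

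The step I expect to be the main obstacle is exactly this transfer from the real doubling index $N$ to a growth bound for the holomorphic extension $\tilde h$: it requires the radius of holomorphic continuation of $u$ to be comparable to $r$ near the relevant points, and a careful iteration of the doubling inequality over scales so that only a universal multiplicative constant (not a power, and not a scale-dependent factor) appears in front of $N$. For eigenfunctions this quantitative analyticity is precisely what the derivative estimates underlying Lemma~\ref{extending} supply, so the argument goes through; everything else --- the rectifiability input for the Crofton formula and the Jensen zero count --- is standard.
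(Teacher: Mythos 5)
Your argument is essentially the paper's own proof: both reduce the measure bound to counting zeros on lines via the integral-geometric (Crofton) formula, complexify the one-dimensional restrictions, and count zeros by Jensen's formula (Han--Lin's Lemma 2.3.2), with $N$ entering through the growth of the holomorphic extension. The transfer step you single out as the main obstacle is indeed the crux --- the statement is false for merely analytic $u$ without quantitative control on the radius of holomorphic continuation (e.g.\ $u=\sin(Mx_1)$ has uniformly bounded doubling index yet nodal measure of order $Mr\cdot r^{n-1}$ in $B_{r/16}(x_0)$) --- and the paper's sketch handles it only implicitly, by asserting $|f_{j,\omega}(z)|\le 1$ for the complexified restriction and by running the lines through chosen points $x_{p_j}$ at which $|u|\ge 2^{-cN}$, so that the lower bound needed for the Jensen count is available at the center of each complex disk.
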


\begin{proof}
Without loss of generality, we may assume that $\|u\|_{L^{\infty}(B_r(x_0))}=1$. Then from the assumption of $N$, we know that for any $p\in B_{r/4}(x_0)$, it holds that $\|u\|_{L^{\infty}(B_{1/16}(p))}\geq4^{-cN}$, where $c$ is a positive constant depending only on $n$. So there exists some point $x_{p}\in B_{r/16}(p)$ such that $|u(x_p)|\geq2^{-cN}$ Choose $p_j$ be the points on $\partial B_{r/4}(x_0)$ with $p_j$ on the $x_j$ axis, $j=1,2,\cdots,n$. Let $x_{p_j}\in B_{r/16}(p_j)$ be the points such point
$|u(x_{p_j})|\geq2^{-cN}$. For each $j=1,2,\cdots,n$ and $\omega$ on the unit sphere, let $f_{j,\omega}(t)=u(x_{p_j}+tr\omega)$ for $t\in(-5/8,5/8)$. Then $f_{j,\omega}(t)$ is analytic for $t\in(-5/8,5/8)$. So we can extend $f_{j,\omega}(t)$ to an analytic function $f_{j,\omega}(z)$ for $z=t+iy$ with $|t|<5/8$ and $|y|<y_0$ for some positive number $y_0$. Then we have $|f_{j,\omega}(0)|\geq 2^{-cN}$ and $|f_{j,\omega}(z)|\leq 1$. Applying Lemma 2.3.2 in $\cite{Q.Han and F.H.Lin}$, such a conclusion can also be seen in $\cite{F.H.Lin,Donnelly and Fefferman}$, we have that
\begin{equation*}
\sharp\left\{t|u(x_{p_j}+tr\omega)=0,|t|<1/2\right\}\leq CN.
\end{equation*}
In particular, it holds that
\begin{equation*}
\sharp\left\{t|u(x_{p_j}+tr\omega)=0,x_{p_j}+tr\omega\in B_{r/16}(x_0)\right\}\leq CN.
\end{equation*}
Then from the integral geometric formula, which can be seen in $\cite{F.H.Lin and X.P.Yang,H.Federer}$, we have
\begin{equation*}
\mathcal{H}^{n-1}\left(\left\{x\in B_{r/16}(x_0)|u(x)=0\right\}\right)\leq CNr^{n-1}.
\end{equation*}
\end{proof}

By the above lemmas, we can get the conclusion of Theorem $\ref{main
theorem}$.

\textbf{Proof of Theorem \ref{main theorem}.}

Using Lemmas $\ref{nodal estimate in small ball}$ and $\ref{upper
bound for the doubling index}$, we have
\begin{equation*}
\mathcal{H}^{n-1}\left(\left\{x\in\Omega|u(x)=0\right\}\right)\leq C(\sqrt{\lambda}+|\mu|).
\end{equation*}
On the other hand, for $|\mu|$ large enough, Lemma $\ref{nodal estimate in small ball}$ and Lemma $\ref{upper bound for the doubling index}$ show that
\begin{equation*}
\mathcal{H}^{n-1}\left(\left\{x\in\Omega|u(x)=0\right\}\right)\leq C\sqrt{\lambda}.
\end{equation*}

\qed


\section{The Non-analytic Case}

In this section, we consider the case that $\Omega$ satisfies the following two assumptions.

(1) $\Omega$ is a $C^{\infty}$ bounded domain;

(2) $\Omega\setminus\Gamma$ is piecewise analytic, where $\Gamma\subseteq\partial\Omega$ is a union of some $n-2$ dimensional submainfolds of $\partial\Omega$.

Because the method in the analytic case cannot be used here
directly, we adopt the argument developed in $\cite{Long Tian and
Xiaoping Yang}$ to deal with the non-analytic case. In this section,
we also assume that $\mu>0$.

We use $\partial\Omega(r)$ and $\Gamma(r)$ to denote the following
two sets respectively,
\begin{eqnarray*}
\partial\Omega(r)&=&\left\{x\in\overline{\Omega}|dist(x,\partial\Omega)\leq r\right\},\\
\Gamma(r)&=&\left\{x\in\overline{\Omega}|dist(x,\Gamma)\leq
r\right\}.
\end{eqnarray*}

First we need to do some preparation.

\begin{lemma}\label{monotonicity formula for the doubling index}
Let $u$ satisfy the equation $\triangle u+\lambda u=0$ in $B_r(x_0)$. Let $w(x,x_{n+1})=u(x)e^{\sqrt{\lambda}x_{n+1}}$. Then $w$ satisfies the equation $\triangle w=0$ on $\Omega\times\mathbb{R}$, i.e., $w$ is a harmonic function on $\Omega\times\mathbb{R}$. For the doubling index of $w$, which is denoted by $\bar{N}(x_0,r)$, it holds that for any $\epsilon>0$, there exists a positive constant $C$
depending only on $n$ and $\epsilon$, such that
\begin{equation}
\bar{N}(x_0,r)\leq (1+\epsilon)\bar{N}(x_0,2r)+C.
\end{equation}
\end{lemma}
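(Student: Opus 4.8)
The plan is to lift the eigenfunction to one higher dimension where it becomes harmonic, and then invoke the classical almost-monotonicity of the frequency/doubling index for harmonic functions. First I would verify the elementary computation that $w(x,x_{n+1})=u(x)e^{\sqrt{\lambda}x_{n+1}}$ is harmonic: writing $\triangle_{n+1}w = e^{\sqrt{\lambda}x_{n+1}}(\triangle u + \lambda u)$, the equation $\triangle u + \lambda u = 0$ in $B_r(x_0)$ gives $\triangle_{n+1} w = 0$ on $B_r(x_0)\times\mathbb{R}$ (and globally on $\Omega\times\mathbb{R}$ when the equation holds on all of $\Omega$). This is the key reduction: it replaces the Helmholtz-type equation by Laplace's equation, at the cost of one extra variable, and crucially the doubling index of $w$ on balls in $\mathbb{R}^{n+1}$ controls that of $u$ on balls in $\mathbb{R}^n$ by the slicing inequality used later in the paper.

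Next I would recall the Almgren frequency function for the harmonic function $w$ centered at $(x_0,0)$,
\[
N_w(x_0,\rho) = \frac{\rho\int_{B_\rho}|\nabla w|^2}{\int_{\partial B_\rho}w^2},
\]
and use its monotonicity in $\rho$ (standard for harmonic functions, see Han--Lin or Logunov as cited). The comparison between the frequency $N_w$ and the doubling index $\bar N$ is by now routine: there are universal constants such that for $\rho\le r$,
\[
\bar N(x_0,\rho/2)\ \lesssim_n\ N_w(x_0,\rho)\ \lesssim_n\ \bar N(x_0,2\rho) + 1,
\]
obtained by integrating $N_w$ against $\rho$ and using $\frac{d}{d\rho}\log\!\big(\rho^{1-n-1}\!\int_{\partial B_\rho}w^2\big) = \tfrac{2}{\rho}N_w(x_0,\rho)$. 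Combining the two-sided comparison with the monotonicity of $N_w$ yields an inequality of the form $\bar N(x_0,r) \le A\,\bar N(x_0,2r) + B$ for absolute constants $A,B$ depending only on $n$.

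The only genuine work is to sharpen the multiplicative constant $A$ from some fixed number down to $1+\epsilon$. The mechanism is scale iteration: the comparison bound $\bar N(x_0,r)\le N_w(x_0,\theta r)$ and $N_w(x_0,\theta r)\le \bar N(x_0,2r)+C$ actually lose a factor only across a bounded ratio of radii, so by subdividing the annulus between $r$ and $2r$ into $\sim 1/\epsilon$ dyadic-in-a-finer-scale pieces and telescoping the monotonicity of $N_w$ across all of them, the accumulated multiplicative loss is $(1+c\epsilon)$ while the additive loss grows only to $C(n,\epsilon)$. Equivalently, one uses that $N_w$ is monotone so $N_w(x_0,r')\le N_w(x_0,r'')$ for $r'\le r''$ with \emph{no} constant, and only the conversions $\bar N\leftrightarrow N_w$ at the two ends cost constants, which one makes as close to $1$ as desired by choosing the conversion radii close to $r$ and $2r$ respectively and absorbing the resulting $O(1/\epsilon)$-many error terms into the additive constant $C$. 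I expect this iteration/telescoping bookkeeping — tracking how the multiplicative error compounds versus how the additive error accumulates as $\epsilon\to 0$ — to be the main (though standard) obstacle; everything else is the textbook harmonic-function frequency machinery applied to $w$.
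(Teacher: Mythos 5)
Your proposal is correct and follows essentially the same route as the paper: lift $u$ to the harmonic function $w$, use the monotone Almgren frequency $\Theta(x_0,\rho)=\rho\int_{B_\rho}|\nabla w|^2/\int_{\partial B_\rho}w^2$ together with its doubling inequality, and convert between $\bar N$ and $\Theta$ at radii perturbed by a small factor $1+\epsilon_1$, so that the multiplicative loss is $\tfrac{1+\log_2(1+\epsilon_1)}{1-\log_2(1+\epsilon_1)}\le 1+\epsilon$ while the $\epsilon_1$-dependent conversion constants are absorbed into the additive term $C$. The ``equivalent'' single-conversion formulation you give at the end is precisely the paper's argument; the finer telescoping you sketch first is not needed.
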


\begin{proof}
Let $\Theta(x_0,r)=r\frac{\int_{B_r(x_0)}|\nabla w|^2dx}{\int_{\partial B_r(x_0)}w^2d\sigma}.$ Such quantity is called the frequency function of $w$, which can be seen in $\cite{Q.Han and F.H.Lin, F.H.Lin}$. $\Theta(x_0,r)$ is monotonicity to $r$, and has the following doubling conditions for any $t>1$.
\begin{equation*}
t^{\Theta(x_0,r/t)}\leq \left(\frac{\fint_{B_r(x_0)}w^2dx}{\fint_{B_{r/t}(x_0)}w^2dx}\right)^{\frac{1}{2}}\leq t^{\Theta(x_0,r)}.
\end{equation*}

From the standard interior estimate, we also have that for $\epsilon>0$,
\begin{equation*}
\sup\limits_{B_r(x_0)}|u|\leq C(\epsilon)\left(\fint_{B_{(1+\epsilon)r}(x_0)}u^2dx\right)^{\frac{1}{2}}
\end{equation*}
where $C(\epsilon)$ is a positive constant depending only on $\epsilon$. So for any $r>0$ and any $\epsilon_1>1$, we have that
\begin{eqnarray*}
\bar{N}(x_0,r)&=&\log_2\frac{\sup\limits_{B_r(x_0)}|u|}{\sup\limits_{B_{r/2}(x_0)}|u|}
\\&\leq&\log_2\frac{C(\epsilon_1)\left(\fint_{B_{(1+\epsilon_1)r}(x_0)}u^2dx\right)^{\frac{1}{2}}}
{\left(\fint_{B_{r/2}(x_0)}u^2dx\right)^{\frac{1}{2}}}
\\&\leq&\log_22(1+\epsilon_1)\Theta(x_0,(1+\epsilon_1)r)+C_1(\epsilon_1).
\end{eqnarray*}
We also have that
\begin{eqnarray*}
\bar{N}(x_0,r)&=&\log_2\frac{\sup\limits_{B_r(x_0)}|u|}{\sup\limits_{B_{r/2}(x_0)}|u|}
\\&\geq&\log_2\frac{\left(\fint_{B_{r}(x_0)}u^2dx\right)^{\frac{1}{2}}}
{C(\epsilon_1)\left(\fint_{B_{(1+\epsilon_1)r/2}(x_0)}u^2dx\right)^{\frac{1}{2}}}
\\&\geq&\log_2\left(\frac{2}{1+\epsilon_1}\right)\Theta(x_0,(1+\epsilon_1)r/2)-C_2(\epsilon_1).
\end{eqnarray*}
From the above inequalities, we have that
\begin{eqnarray*}
\bar{N}(x_0,r)&\leq&\log_22(1+\epsilon_1)\Theta(x_0,(1+\epsilon_1)r)+C_1(\epsilon)
\\&\leq&\frac{1+\log_2(1+\epsilon_1)}{1-\log_2(1+\epsilon_1)}N(x_0,2\frac{1+\log_2(1+\epsilon_1)}
{1+\log_2(1+\epsilon_1)}r)
+C_1(\epsilon_1)+C_2(\epsilon_1).
\end{eqnarray*}
For any given $\epsilon>0$, choose $\epsilon_1>0$ small enough such that $\frac{1+\log_2(1+\epsilon_1)}{1-\log_2(1+\epsilon_1)}\leq(1+\epsilon)$, we can get the desired result.
\end{proof}

\begin{remark}\label{relationship}
From some direct calculation, it is easy to know that
\begin{equation*}
N(x_0,r)\leq C(\bar{N}(x_0,2r)+\sqrt{\lambda}r),
\end{equation*}
where $C$ is a positive constant depending only on $n$.
\end{remark}

\begin{lemma}\label{preparation for the L2 norm}
Let $u$ be an eigenfunction
and $\lambda$ be the corresponding eigenvalue. Assume that $\lambda$ is large enough and $\mu>0$. Then, for
\begin{equation}
r_0=C\left(\sqrt{\lambda}+\mu\right)^{-\frac{n+4}{2n}}\frac{\sqrt{\mu}}{1+\sqrt{\mu}},
\end{equation}
it holds that
\begin{equation}
\|u\|_{L^2(\partial\Omega(r_0))}\leq\frac{1}{2}\|u\|_{L^2(\Omega)},
\end{equation}
where $C$ is a positive constant depending only on $n$ and $\Omega$.
\end{lemma}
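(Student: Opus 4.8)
The plan is to estimate the $L^2$-mass of $u$ in the boundary collar $\partial\Omega(r_0)$ by combining a trace-type inequality on $\partial\Omega$ with the Robin boundary condition and interior elliptic control, and then to choose $r_0$ so small that the resulting bound is at most $\tfrac12\|u\|_{L^2(\Omega)}$. First I would localize: cover $\partial\Omega$ by finitely many boundary coordinate charts in which $\partial\Omega(r)$ looks like a slab $\{0\le x_n\le r\}$ (up to bi-Lipschitz distortion depending only on $\Omega$). In each chart write, for $x'$ in the boundary and $0\le s\le r_0$,
\begin{equation*}
u(x',s)^2 = u(x',0)^2 + 2\int_0^s u(x',t)\,u_{x_n}(x',t)\,dt,
\end{equation*}
integrate in $x'$ and in $s\in(0,r_0)$, and sum over charts. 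This yields
\begin{equation*}
\|u\|_{L^2(\partial\Omega(r_0))}^2 \le C r_0 \|u\|_{L^2(\partial\Omega)}^2 + C r_0 \|u\|_{L^2(\partial\Omega(r_0))}\,\|\nabla u\|_{L^2(\partial\Omega(r_0))},
\end{equation*}
so after absorbing, $\|u\|_{L^2(\partial\Omega(r_0))}^2 \le C r_0 \|u\|_{L^2(\partial\Omega)}^2 + C r_0^2 \|\nabla u\|_{L^2(\Omega)}^2$.

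Next I would control the two quantities on the right in terms of $\|u\|_{L^2(\Omega)}$ and the spectral parameters. For the gradient, multiplying $\triangle u+\lambda u=0$ by $u$ and integrating by parts gives $\|\nabla u\|_{L^2(\Omega)}^2 = \lambda\|u\|_{L^2(\Omega)}^2 - \mu\|u\|_{L^2(\partial\Omega)}^2 \le \lambda\|u\|_{L^2(\Omega)}^2$ since $\mu>0$; this is where the sign hypothesis $\mu>0$ enters, and it is exactly what keeps the $\mu$-dependence of $r_0$ favorable. For the boundary $L^2$-norm I would use the boundary equation \eqref{eigen equation on boundary} together with elliptic estimates on the compact manifold $\partial\Omega$: testing \eqref{eigen equation on boundary} against $u$ shows $\|\nabla_{\partial\Omega}u\|_{L^2(\partial\Omega)}^2 \sim (\lambda+\mu^2+c\mu)\|u\|_{L^2(\partial\Omega)}^2$, and then a standard interpolation/trace inequality of the form $\|u\|_{L^2(\partial\Omega)}^2 \le \varepsilon\|\nabla u\|_{L^2(\Omega)}^2 + C\varepsilon^{-1}\|u\|_{L^2(\Omega)}^2$, optimized in $\varepsilon$, converts this into $\|u\|_{L^2(\partial\Omega)}^2 \le C(\sqrt\lambda+\mu)\|u\|_{L^2(\Omega)}^2$ (one should track that the power matches what is needed; an $L^2$--$L^\infty$ or Gagliardo–Nirenberg step may be used to get the exponent $\tfrac{n+4}{2n}$ in the statement, exploiting that $\partial\Omega$ is $(n-1)$-dimensional). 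Combining, $\|u\|_{L^2(\partial\Omega(r_0))}^2 \le C\big(r_0(\sqrt\lambda+\mu) + r_0^2\lambda\big)\|u\|_{L^2(\Omega)}^2$, and choosing $r_0$ of the stated size $C(\sqrt\lambda+\mu)^{-(n+4)/(2n)}\sqrt\mu/(1+\sqrt\mu)$ makes the parenthesized factor at most $1/4$, giving the claim.

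The main obstacle I anticipate is getting the precise exponent $-\tfrac{n+4}{2n}$ rather than the naive $-1$ that the crude estimate above produces, and getting the exact factor $\sqrt\mu/(1+\sqrt\mu)$; the naive argument only gives $r_0 \sim (\sqrt\lambda+\mu)^{-1}$ up to $\mu$-factors. Extracting the dimensional exponent requires replacing the plain trace inequality by a sharper one — interpolating the boundary $L^2$-norm between $\|u\|_{L^\infty}$ (controlled via Lemma \ref{extending}-type sup bounds, or rather via $L^2$--$L^\infty$ elliptic estimates with the $\lambda$-dependence made explicit) and $\|u\|_{L^1}$, and using the $(n-1)$-dimensionality of $\partial\Omega$ to save a power of the frequency. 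One must also be careful that all covering constants, the bi-Lipschitz constants of the boundary charts, and the Sobolev/trace constants depend only on $n$ and $\Omega$ and not on $\mu$ or $\lambda$; since $\partial\Omega$ is fixed and $C^\infty$ this is fine, but it needs to be stated. The regime $\mu$ small versus $\mu$ large should be handled uniformly by the factor $\sqrt\mu/(1+\sqrt\mu)$, which interpolates between $\sqrt\mu$ and $1$, so no case split is really needed once the exponents are pinned down.
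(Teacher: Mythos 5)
Your route is genuinely different from the paper's. The paper bounds $\|u\|_{L^2(\partial\Omega(r_0))}$ by a power of $r_0$ times $\|u\|_{L^{\infty}(\Omega)}$, converts $\|u\|_{L^{\infty}(\Omega)}$ back to $\|u\|_{L^2(\Omega)}+\|u\|_{L^2(\partial\Omega)}$ through an $L^2$-to-$L^{\infty}$ elliptic estimate carrying the explicit factor $(\sqrt{\lambda}+\mu)^{\frac{n+2}{2}}$ (this, paired with the collar-volume factor, is where the exponent $\frac{n+4}{2n}$ actually comes from), and then controls $\|u\|_{L^2(\partial\Omega)}$ by $\sqrt{\lambda/\mu}\,\|u\|_{L^2(\Omega)}$ via the Robin identity $\int_{\partial\Omega}u u_{\nu}\,d\sigma=-\mu\int_{\partial\Omega}u^2\,d\sigma$ combined with $\int_{\Omega}|\nabla u|^2\,dx\geq 0$; that identity, not a trace inequality, is the source of the factor $\sqrt{\mu}/(1+\sqrt{\mu})$ in $r_0$. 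Your FTC collar estimate plus the energy identity avoids the $L^{\infty}$ detour entirely and is cleaner where it works.

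The genuine gap is that your final bound $\|u\|_{L^2(\partial\Omega(r_0))}^2\leq C\bigl(r_0(\sqrt{\lambda}+\mu)+r_0^2\lambda\bigr)\|u\|_{L^2(\Omega)}^2$ forces $r_0\lesssim(\sqrt{\lambda}+\mu)^{-1}$, and this does \emph{not} cover the stated radius when $n>4$: with the stated $r_0$ one gets $r_0(\sqrt{\lambda}+\mu)=C(\sqrt{\lambda}+\mu)^{\frac{n-4}{2n}}\frac{\sqrt{\mu}}{1+\sqrt{\mu}}$, which for fixed $\mu$ (say $\mu=1$) is unbounded as $\lambda\to\infty$ once $n\geq 5$, since the exponent is then positive. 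Because $\partial\Omega(r)$ grows with $r$, proving the estimate for a smaller radius does not give it for the stated $r_0$, and it is the stated $r_0$ that the subsequent iteration (Lemma \ref{upper bound for doubing index away gamma}) consumes. You flag exactly this as the ``main obstacle,'' but the proposed remedy --- a sharpened trace inequality via $L^1$/$L^{\infty}$ interpolation on $\partial\Omega$ --- is left unexecuted and is not the mechanism the paper uses to generate the dimensional exponent. For $n\leq 4$ your argument does close (both terms become $\leq 1/4$ after shrinking the constant in $r_0$), so the gap is confined to, but real in, dimensions $n\geq 5$. A secondary point: the claimed two-sided relation $\|\nabla_{\partial\Omega}u\|_{L^2(\partial\Omega)}^2\sim(\lambda+\mu^2+c\mu)\|u\|_{L^2(\partial\Omega)}^2$ from testing the boundary equation is neither justified (the drift term obstructs the lower bound) nor needed; the ordinary trace inequality already yields $\|u\|_{L^2(\partial\Omega)}^2\leq C\sqrt{\lambda}\,\|u\|_{L^2(\Omega)}^2$ for $\lambda$ large, which is all your scheme requires.
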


\begin{proof}
By the standard elliptic estimate and the Sobolev Embedding Theorem,
we have
\begin{eqnarray*}
\|u\|_{L^2(\partial\Omega(r_0))}&\leq&Cr^n_0\|u\|_{L^{\infty}(\Omega)}
\\&\leq&Cr^n_0\left(\lambda^{\frac{n+2}{4}}\|u\|_{L^2(\Omega)}+\|u\|_{L^{\infty}(\partial\Omega)}\right).
\end{eqnarray*}
Because $\Omega$ is $C^{\infty}$ bounded,   $(\ref{eigen equation on
boundary})$ also holds on $\partial\Omega$. Thus by the standard
elliptic and the Sobolev Embedding Theorem again, we have
\begin{equation*}
\|u\|_{L^{\infty}(\partial\Omega)}\leq C\left(\sqrt{\lambda}+\mu\right)^{\frac{n+1}{2}}\|u\|_{L^2(\partial\Omega)}.
\end{equation*}
So
\begin{equation}\label{3.3}
\begin{array}{l}
\|u\|_{L^2(\partial\Omega(r_0))} \leq
Cr^n_0\left(\lambda^{\frac{n+2}{4}}\|u\|_{L^2(\Omega)}+(\sqrt{\lambda}+\mu)^{\frac{n+1}{2}}\|u\|_{L^2(\partial\Omega)}\right)
\\ \quad\quad\quad\quad\;\ \leq  Cr^n_0(\sqrt{\lambda}+\mu)^{\frac{n+2}{2}}\left(\|u\|_{L^2(\Omega)}+\|u\|_{L^2(\partial\Omega)}\right).
\end{array}
\end{equation}
By the Robin boundary condition, we have that
\begin{equation}\label{3.4}
\begin{array}{l}
\|u\|_{L^2(\partial\Omega)}^2 = \int_{\partial\Omega}u^2d\sigma
\\ \quad\quad\quad\;\;=\frac{1}{-\mu}\int_{\partial\Omega}uu_{\nu}d\sigma
\\ \quad\quad\quad\;\;= -\frac{1}{\mu}\int_{\Omega}div(u\nabla u)dx
\\ \quad\quad\quad\;\;= -\frac{1}{\mu}\int_{\Omega}|\nabla u|^2dx+\frac{\lambda}{\mu}\int_{\Omega}u^2dx
\\ \quad\quad\quad\;\;\leq \frac{\lambda}{\mu}\|u\|_{L^2(\Omega)}^2.
\end{array}
\end{equation}
By (\ref{3.3}) and  (\ref{3.4}), we have
\begin{equation*}
\|u\|_{L^2(\partial\Omega(r_0))}\leq
Cr^n_0\left(\sqrt{\lambda}+\mu\right)^{\frac{n+4}{2}}\left(1+\frac{1}{\sqrt{\mu}}\right)\|u\|_{L^2(\Omega)}.
\end{equation*}
So there exists some positive constant $C'$ depending only on $n$ and $\Omega$ such that for $r_0=C'(\sqrt{\lambda}+\mu)^{-\frac{n+4}{2n}}\frac{\sqrt{\mu}}{1+\sqrt{\mu}}$, the desired result holds.
\end{proof}

\begin{lemma}\label{extending again}
Let $u$ be an eigenfunction
and $\lambda$ be the corresponding eigenvalue. Assume that $\lambda$ is large enough and $\mu>0$. Then
for any $x\in\overline{\Omega}\setminus\Gamma$, it holds that
\begin{equation}
\|u\|_{L^{\infty}(B_r(x))}\leq e^{C(\sqrt{\lambda}+\mu-\ln\mu-\ln r)}\|u\|_{L^2(\Omega)},
\end{equation}
where $r=\frac{1}{2}\min\left\{dist(x,\Gamma),\delta\right\}$, $\delta$ is the same constant as in Lemma $\ref{extending}$, $C$ is a positive constant depending only on $n$ and $\Omega$.
\end{lemma}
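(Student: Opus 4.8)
The plan is to obtain the estimate by multiplying together two separate ingredients. The first is the crude global bound
\begin{equation*}
\|u\|_{L^{\infty}(\Omega)}\le e^{C(\sqrt{\lambda}+\mu-\ln\mu)}\|u\|_{L^{2}(\Omega)} .
\end{equation*}
Since $\partial\Omega$ is $C^{\infty}$, equation (\ref{eigen equation on boundary}) holds on all of $\partial\Omega$, so, exactly as in the proof of Lemma \ref{preparation for the L2 norm}, the standard elliptic estimate and the Sobolev embedding give $\|u\|_{L^{\infty}(\partial\Omega)}\le C(\sqrt{\lambda}+\mu)^{(n+1)/2}\|u\|_{L^{2}(\partial\Omega)}$ together with $\|u\|_{L^{\infty}(\Omega)}\le C\lambda^{(n+2)/4}\|u\|_{L^{2}(\Omega)}+C\|u\|_{L^{\infty}(\partial\Omega)}$, while (\ref{3.4}) gives $\|u\|_{L^{2}(\partial\Omega)}\le\sqrt{\lambda/\mu}\,\|u\|_{L^{2}(\Omega)}$. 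Combining these and using $t^{p}\le e^{pt}$ together with $\mu^{-1/2}=e^{-\frac{1}{2}\ln\mu}$ yields the displayed bound.

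The second ingredient is the local continuation estimate $\|u\|_{L^{\infty}(B_{r}(x))}\le e^{C(\sqrt{\lambda}+\mu)}r^{-C}\|u\|_{L^{\infty}(\Omega)}$. The point of the choice $r=\frac{1}{2}\min\{\mathrm{dist}(x,\Gamma),\delta\}$ is that $B_{r}(x)$ then stays at distance $\ge r$ from $\Gamma$; if $B_{r}(x)\subseteq\Omega$ there is nothing more to do, so assume $B_{r}(x)$ meets $\partial\Omega$. Then $B_{3r/2}(x)\cap\partial\Omega$ lies at distance $\ge r/2$ from $\Gamma$, inside a single analytic piece of $\partial\Omega$, and by assumption (2) the boundary geometry there — hence the coefficients $\vec{b},c$ of (\ref{eigen equation on boundary}) — is analytic with analyticity radius $\gtrsim r$, i.e. $|D^{\beta}\vec{b}|+|D^{\beta}c|\le C^{|\beta|+1}|\beta|!\,r^{-|\beta|}$. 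Running the argument of Lemma \ref{extending} on this scale — elliptic estimates up to the analytic boundary piece, together with the boundary estimate $\|u\|_{W^{k,2}(B_{3r/2}(x)\cap\partial\Omega)}\le C^{k}r^{-k}\max\{k,r(\sqrt{\lambda}+\mu)\}^{k}\|u\|_{L^{2}(\partial\Omega)}$, and then a local Sobolev embedding on $r$-balls — gives
\begin{equation*}
\|D^{\alpha}u\|_{L^{\infty}(B_{r}(x)\cap\Omega)}\le C^{|\alpha|}r^{-|\alpha|}\max\{|\alpha|,\,r(\sqrt{\lambda}+\mu)\}^{|\alpha|}\,r^{-A}\,\|u\|_{L^{\infty}(\Omega)}
\end{equation*}
with $A=A(n)$. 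Extending $u$ across the analytic piece by the Taylor series of radius $cr$ for a small fixed $c=c(n,\Omega)$ and splitting the resulting sum at $|\alpha|=\lceil r(\sqrt{\lambda}+\mu)\rceil$ — for smaller $|\alpha|$ the partial sum is $\le e^{Ccr(\sqrt{\lambda}+\mu)}\le e^{Cc\delta(\sqrt{\lambda}+\mu)}$ since $r\le\delta$, and for larger $|\alpha|$ the terms are dominated by a fixed convergent geometric series once $c$ is small enough — one gets $\|u\|_{L^{\infty}}\le e^{C(\sqrt{\lambda}+\mu)}r^{-A}\|u\|_{L^{\infty}(\Omega)}$ on the $cr$-neighborhood so obtained. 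The extended function again solves $\triangle u+\lambda u=0$ and stays at distance $\gtrsim r$ from $\Gamma$, so one iterates this continuation a number of times depending only on $c$ (to reach all of $B_{r}(x)$, since each step advances only a fixed fraction of $r$), each step costing one more factor $e^{C(\sqrt{\lambda}+\mu)}r^{-A}$; covering $B_{r}(x)$ by $O_{n}(1)$ such balls and taking the maximum gives the claimed estimate. Multiplying the two ingredients and absorbing $r^{-C}=e^{-C\ln r}$ into the exponent finishes the proof.

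The main obstacle is the control of the $r$-dependence in the second ingredient: one must push the analytic continuation a fixed fraction of the way from $\partial\Omega$ toward $\Gamma$ while losing only a polynomial factor in $1/r$. This forces one (i) to quantify ``piecewise analytic'' as ``the analyticity radius of the relevant boundary piece is $\gtrsim\mathrm{dist}(\cdot,\Gamma)$'', (ii) to balance the eigenvalue scale $\sqrt{\lambda}+\mu$ against the geometric scale $1/r$ by splitting the Taylor sum at $|\alpha|\sim r(\sqrt{\lambda}+\mu)$, and (iii) to iterate the one-step continuation a bounded number of times because a single Taylor step only reaches a fixed fraction of $r$. By contrast the term $-\ln\mu$ is cheap: it is exactly the $\mu^{-1/2}$ produced by (\ref{3.4}).
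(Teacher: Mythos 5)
Your proposal is correct and follows essentially the same route as the paper: localized analytic elliptic estimates at the scale $r=\frac{1}{2}\min\{\mathrm{dist}(x,\Gamma),\delta\}$ (which is where the $r^{-k}$ losses and hence the $-\ln r$ come from), a Taylor-series extension summed against those derivative bounds, and the Robin-condition identity (\ref{3.4}) converting $\|u\|_{L^{2}(\partial\Omega)}$ into $\sqrt{\lambda/\mu}\,\|u\|_{L^{2}(\Omega)}$, which is the source of the $-\ln\mu$. The only differences are organizational (you factor through $\|u\|_{L^{\infty}(\Omega)}$ rather than through $\|u\|_{L^{2}(\Omega)}+\|u\|_{L^{2}(\partial\Omega)}$) and that you spell out the splitting of the Taylor sum and the iterated continuation, steps the paper compresses into ``the same way as in Lemma \ref{extending}.''
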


\begin{proof}
 Without loss of generality, we may assume that $dist(x,\Gamma)\leq\delta$. Since $\partial\Omega\setminus\Gamma$ is piecewise analytic,
 all the derivation of $u$ on the whole domain $\Omega$ can be estimated by the same way as in Lemma $\ref{extending}$. Thus we have
\begin{eqnarray*}
\|u\|_{L^{\infty}(B_r(x))}&\leq&\sum\limits_{k=0}^{\infty}\sum\limits_{|\alpha|=k}\frac{r^k}{\alpha!}|D^{\alpha}u(x)|
\\&\leq&\sum\limits_{k=0}^{\infty}\sum\limits_{|\alpha|=k}
r^k\cdot\frac{C^{k+\frac{n+2}{2}}(\sqrt{\lambda}+\mu)^{k+\frac{n+2}{2}}}{r^{k+\frac{n+2}{2}}}
\left(\|u\|_{L^2(\Omega)}+\|u\|_{L^2(\partial\Omega)}\right)
\\&\leq&e^{C(\sqrt{\lambda}+\mu-\ln r)}\left(\|u\|_{L^2(\Omega)}+\|u\|_{L^2(\partial\Omega)}\right)
\\&\leq&e^{C(\sqrt{\lambda}+\mu-\ln
r)}(1+\frac{\sqrt{\lambda}}{\sqrt{\mu}})\|u\|_{L^2(\Omega)}.
\end{eqnarray*}
In the last inequality we have used the same arguments as in the
proof of Lemma $\ref{preparation for the L2 norm}$. Thus we get the
desired result since
$\sqrt{\lambda}/\sqrt{\mu}=e^{C(\ln\lambda-\ln\mu)}$ and we have
already required that the eigenvalue $\lambda$ is large enough.
\end{proof}

Now we will consider the upper bound for the doubling index of $u$ introduced in Section 2.

\begin{lemma}\label{upper bound for doubing index away gamma}
Let $u$ be an eigenfunction on $\Omega$
and $\lambda$ is the corresponding eigenvalue. Moreover, we also assume that $\mu>0$ and $\lambda$ is large enough. Then there exists a positive number $R_0$ depending only on $n$ and $\Omega$, such that for any $x\in\Omega\setminus\Gamma(R_0)$ and any $\alpha\in(0,1)$,
\begin{equation}
N(x,R_0)\leq C(\sqrt{\lambda}+\mu^{\alpha}+\mu^{-c\alpha}),
\end{equation}
where $C$ is a positive constant depending on $n$, $\Omega$ and $\alpha$, and $c$ is a positive constant depending only on $n$.
\end{lemma}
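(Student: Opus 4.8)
The plan is to control $N(x,R_{0})=\log_{2}\bigl(\max_{B_{R_{0}}(x)}|u|/\max_{B_{R_{0}/2}(x)}|u|\bigr)$ by bounding the numerator from above by $e^{C(\sqrt{\lambda}+\mu^{\alpha}+\mu^{-c\alpha})}\|u\|_{L^{2}(\Omega)}$ and the denominator from below by $e^{-C(\sqrt{\lambda}+\mu^{\alpha}+\mu^{-c\alpha})}\|u\|_{L^{2}(\Omega)}$. First fix $R_{0}\le\delta$ small enough, in terms of $n$, $\Omega$ and $\Gamma$, that whenever $x\in\Omega\setminus\Gamma(R_{0})$ the set $B_{2R_{0}}(x)\cap\partial\Omega$ lies in a single analytic piece of $\partial\Omega\setminus\Gamma$. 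Then, exactly as in Lemma~\ref{extending} and Lemma~\ref{extending again}, $u$ continues analytically (still solving $\triangle u+\lambda u=0$) to $B_{R_{0}}(x)$, and the extension estimates there localise near $x$ with constants depending on $\operatorname{dist}(x,\Gamma)\ge R_{0}$. Only the analyticity of $\partial\Omega\setminus\Gamma$ enters here, the $(n-2)$--dimensionality of $\Gamma$ being needed only in the subsequent step.

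\textbf{Upper bound on $\max_{B_{R_{0}}(x)}|u|$ --- the crux.} If $0<\mu\le1$, Lemma~\ref{extending again} (applied with $r=\tfrac12\min\{\operatorname{dist}(x,\Gamma),\delta\}\ge R_{0}/2$, then once more to reach radius $R_{0}$) gives $\max_{B_{R_{0}}(x)}|u|\le e^{C(\sqrt{\lambda}-\ln\mu)}\|u\|_{L^{2}(\Omega)}$, which is admissible since $-\ln\mu\le(c\alpha)^{-1}\mu^{-c\alpha}$. The delicate regime is $\mu>1$, where the generic continuation of $u$ across $\partial\Omega$ carries a factor $e^{C\mu}$ that is far too large; the idea is to continue $u$ only across a neighbourhood of width $\rho=\mu^{-(1-\alpha)}$ of the analytic boundary piece. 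On $\partial\Omega\setminus\Gamma$, $u$ satisfies \eqref{eigen equation on boundary}, an elliptic equation on an analytic manifold whose zeroth--order coefficient has size $\asymp(\sqrt{\lambda}+\mu)^{2}$, so $\|D^{k}(u|_{\partial\Omega})\|_{L^{\infty}}\le\bigl(C(\sqrt{\lambda}+\mu)\bigr)^{k}\|u\|_{L^{2}(\partial\Omega)}$; expressing the normal derivatives of $u$ on $\partial\Omega$ through $\triangle u=-\lambda u$, the Taylor series off $\partial\Omega$ converges on balls of radius $\asymp(\sqrt{\lambda}+\mu)^{-1}$, and summing it over the distance $\rho$ costs only $e^{C(\sqrt{\lambda}+\mu)\rho}=e^{C\sqrt{\lambda}\rho}e^{C\mu^{\alpha}}\le e^{C(\sqrt{\lambda}+\mu^{\alpha})}$ because $\rho<1$. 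To upgrade this to a bound on $\max_{B_{R_{0}}(x)}|u|$ at the fixed scale $R_{0}\gg\rho$ one invokes the iteration procedure of \cite{Long Tian and Xiaoping Yang}: for the harmonic lift $w=u\,e^{\sqrt{\lambda}\,x_{n+1}}$ one chains the almost--monotonicity of Lemma~\ref{monotonicity formula for the doubling index} across the dyadic scales between $\rho$ and $R_{0}$, the $\asymp(1-\alpha)\ln\mu$ steps contributing only an additive $O(\ln\mu)=O(\mu^{\alpha})$, and feeds in the interior frequency bound $\lesssim\sqrt{\lambda}$ on those balls of the chain that lie inside $\Omega$. Via Remark~\ref{relationship} this yields $\max_{B_{R_{0}}(x)}|u|\le e^{C(\sqrt{\lambda}+\mu^{\alpha})}\|u\|_{L^{2}(\Omega)}$.

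\textbf{Lower bound on $\max_{B_{R_{0}/2}(x)}|u|$.} By Lemma~\ref{preparation for the L2 norm}, at least half of the $L^{2}$ mass of $u$ lies in $\Omega\setminus\partial\Omega(r_{0})$, and combined with the polynomial global bound $\|u\|_{L^{\infty}(\Omega)}\le\bigl(\lambda(1+\mu)(1+\mu^{-1})\bigr)^{C_{n}}\|u\|_{L^{2}(\Omega)}$ --- obtained by iterating the interior elliptic estimate with the boundary equation \eqref{eigen equation on boundary} and the trace bound \eqref{3.4} --- this produces a ball $B_{r_{0}}(y_{*})\subset\Omega$ with $|u(y_{*})|\ge c(\Omega)\|u\|_{L^{2}(\Omega)}$. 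Propagating this lower bound through the chain of scales of Lemma~\ref{monotonicity formula for the doubling index} and a bounded Harnack chain connecting $y_{*}$ to $x$ inside $B_{R_{0}}(x)$ --- now via the three--sphere inequality for the analytically continued solution on $B_{R_{0}}(x)$, whose ``cap'' is precisely the quantity estimated in the previous paragraph --- gives $\max_{B_{R_{0}/2}(x)}|u|\ge e^{-C(\sqrt{\lambda}+\mu^{\alpha}+\mu^{-c\alpha})}\|u\|_{L^{2}(\Omega)}$. Dividing, $N(x,R_{0})\le C(\sqrt{\lambda}+\mu^{\alpha}+\mu^{-c\alpha})$.

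\textbf{Main obstacle.} The hard step is the $\mu>1$ branch of the upper bound: one must renounce the macroscopic continuation of $u$ in favour of a thin one, of width $\rho=\mu^{-(1-\alpha)}$, and then climb back up to scale $R_{0}$ through the almost--monotonicity formula without ever re--introducing the factor $e^{C\mu}$; the width $\rho$, hence the exponent $\alpha$, is pinned down by balancing the continuation cost $e^{C\mu\rho}$ against the number $\log_{2}(R_{0}/\rho)$ of dyadic steps. The companion term $\mu^{-c\alpha}$ is the unavoidable price of the opposite, near--Neumann regime $\mu\to0^{+}$, where no such thinning of the continuation is available.
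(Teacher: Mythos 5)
Your overall strategy --- bounding $\max_{B_{R_0}(x)}|u|$ from above and $\max_{B_{R_0/2}(x)}|u|$ from below separately against $\|u\|_{L^2(\Omega)}$ --- is genuinely different from the paper's, and its crux step does not go through. The problem is the numerator bound for $\mu>1$. The ball $B_{R_0}(x)$ reaches a distance comparable to $R_0$ beyond $\partial\Omega$, while your analytic continuation only covers a layer of width $\rho=\mu^{-(1-\alpha)}\ll R_0$. On the part of $B_{R_0}(x)$ outside that layer the function whose supremum you are taking is only defined through the full extension of Lemmas \ref{extending} and \ref{extending again}, whose cost is $e^{C(\sqrt{\lambda}+\mu)}$; and the almost-monotonicity of Lemma \ref{monotonicity formula for the doubling index} applies only to balls contained in the region where $w=u\,e^{\sqrt{\lambda}x_{n+1}}$ is harmonic, so it cannot ``climb'' from the $\rho$-layer out to distance $R_0$ beyond the boundary. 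In addition, the chaining you describe feeds in ``the interior frequency bound $\lesssim\sqrt{\lambda}$,'' which is not available a priori in the non-analytic setting: an interior doubling bound is exactly what this lemma is establishing, and even in the analytic case Lemma \ref{upper bound for the doubling index} only gives $C(\sqrt{\lambda}+|\mu|)$ for general $\mu$. So that step is circular. Note also that the numerator bound $e^{C(\sqrt{\lambda}+\mu^{\alpha}+\mu^{-c\alpha})}\|u\|_{L^2(\Omega)}$ is, given the denominator bound, essentially equivalent to the lemma itself --- it is not an easier intermediate target.

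The paper never needs an absolute upper bound on the numerator; it only ever estimates the ratio. The seed is that at the maximum point $\overline{x}$ of $u$ on $\Omega\setminus\partial\Omega(r_0)$, with $r_0$ the $\mu$- and $\lambda$-dependent radius of Lemma \ref{preparation for the L2 norm}, one has $|u(\overline{x})|\geq c\|u\|_{L^2(\Omega)}\geq c\sqrt{\mu}\,(\sqrt{\lambda}+\mu)^{-\frac{n+4}{2}}\|u\|_{L^{\infty}(\Omega)}$, while $\sup_{B_{r_0}(\overline{x})}|w|\leq e^{\sqrt{\lambda}r_0}\|u\|_{L^{\infty}(\Omega)}$ because $B_{r_0}(\overline{x})$ projects into $\Omega$ --- no extension cost at all. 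Hence $\bar{N}(\overline{x},r_0)\lesssim\ln(\sqrt{\lambda}+\mu)-\ln\mu+\sqrt{\lambda}r_0$, i.e.\ only logarithmic in $\mu$. The Tian--Yang chain then transports this bound over $k\asymp\ln(R_0/r_0)$ steps, each ball staying (in projection) inside $\Omega$ and each step costing a multiplicative $(1+\epsilon)$ by Lemma \ref{monotonicity formula for the doubling index}; with $\epsilon$ tuned to $\alpha$, the accumulated factor $(1+\epsilon)^k$ is a small power of $(\sqrt{\lambda}+\mu)/\mu$, and a small power times a logarithm is absorbed into $\sqrt{\lambda}+\mu^{\alpha}+\mu^{-c\alpha}$. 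The boundary extension enters only at the very end, for a bounded number of fixed-radius steps, where it affects the ratio only through constant factors. Your thin-layer idea might be usable in a reworked covering argument employing balls of radius $\rho$ near $\partial\Omega$, but it does not prove the lemma at the fixed scale $R_0$ as stated.
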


\begin{proof}
Because $\partial\Omega$ is bounded and $C^{\infty}$ smooth, there
exists some positive constant $R_0$ depending only on $n$ and
$\Omega$, such that for any point $x\in\partial\Omega(R_0)$, there
is one and only one point $x'\in\partial\Omega$ satisfying that
$dist(x,\partial\Omega)=dist(x,x')$.
Define $w(x,x_{n+1})=u(x)e^{\sqrt{\lambda}x_{n+1}}$.
Let $\overline{x}$ be the maximum point of $u$ on
$\overline{\Omega\setminus\partial\Omega(r_0)}$, where $r_0$ is the
same positive constant as in Lemma $\ref{preparation for the L2
norm}$. On the one hand, it is obvious that
\begin{equation*}
\|w\|_{L^{\infty}(B_{r_0}(\overline{x}))}\leq\|u\|_{L^{\infty}(\Omega)}e^{\sqrt{\lambda}r_0}.
\end{equation*}
On the other hand, by Lemma $\ref{preparation for the L2 norm}$
there holds
$\|u\|_{L^2(\Omega\setminus\partial\Omega(r_0))}\geq\frac{1}{2}\|u\|_{L^2(\Omega)}$,
and thus we have
\begin{equation}\label{3.7}
\begin{array}
{l} \|u\|_{L^{\infty}(B_{r_0/2}(\overline{x}))} \geq
|u(\overline{x})|
\\ \quad\quad\quad\quad\;\quad= \|u\|_{L^{\infty}(\Omega\setminus\partial\Omega(r_0))}\\  \quad\quad\quad\quad\quad\;\geq C\|u\|_{L^{2}(\Omega\setminus\partial\Omega(r_0))}
\\ \quad\quad\quad\quad\quad\;\geq  \frac{C}{2}\|u\|_{L^2(\Omega)}.
\end{array}
\end{equation}
Because
\begin{equation}\nonumber
\begin{array}
{l} \|u\|_{L^{\infty}(\Omega)}
 \leq C\left(\sqrt{\lambda}+\mu\right)^{\frac{n+2}{2}}\left(\|u\|_{L^2(\Omega)}+\|u\|_{L^2(\partial\Omega)}\right)
\\ \quad\quad\quad\; \leq C\left(\sqrt{\lambda}+\mu\right)^{\frac{n+2}{2}}\left(1+\frac{\sqrt{\lambda}}{\sqrt{\mu}}\right)\|u\|_{L^2(\Omega)}
\\ \quad\quad\;\quad\leq C\left(\sqrt{\lambda}+\mu\right)^{\frac{n+4}{2}}/\sqrt{\mu}\|u\|_{L^2(\Omega)},
\end{array}
\end{equation}
it holds that
\begin{equation}\label{3.8}
\|u\|_{L^2(\Omega)}\geq
C\sqrt{\mu}\left(\sqrt{\lambda}+\mu\right)^{-\frac{n+4}{2}}\|u\|_{L^{\infty}(\Omega)}.
\end{equation}
Then by (\ref{3.7}) and (\ref{3.8}), we have
\begin{equation*}
\|w\|_{L^{\infty}(B_{r_0/2}(\overline{x}))}\geq\|u\|_{L^{\infty}(B_{r_0/2}(\overline{x}))}\geq
C\sqrt{\mu}\left(\sqrt{\lambda}+\mu\right)^{-\frac{n+4}{2}}\|u\|_{L^{\infty}(\Omega)}.
\end{equation*}
Therefore
\begin{eqnarray*}
\bar{N}(\overline{x},r_0)&=&\log_2\frac{\sup\limits_{B_{r_0}(\overline{x},0)}|w|}
{\sup\limits_{B_{r_0/2}(\overline{x},0)}|w|}
\\&\leq&\log_2\frac{\|u\|_{L^{\infty}(\Omega)}}{C\sqrt{\mu}(\sqrt{\lambda}+\mu)^{-\frac{n+4}{2}}
\|u\|_{L^{\infty}(\Omega)}}+C\sqrt{\lambda}r_0\\&
\leq& C(\ln(\sqrt{\lambda}+\mu)-\ln\mu+\sqrt{\lambda}r_0).
\end{eqnarray*}
By Lemma $\ref{monotonicity formula for the doubling index}$ with $\epsilon$ satisfies that $\log_{\frac{5}{4}}(1+\epsilon)\cdot\frac{2n}{n+4}=\frac{\alpha}{4}$,  it holds that
\begin{equation*}
\bar{N}(\overline{x},r_0/2)\leq C(1+\epsilon)(\ln(\sqrt{\lambda}+\mu)-\ln\mu+\sqrt{\lambda}r_0),
\end{equation*}
provided that $\lambda$ is large enough.

Let $x_1\in \partial
B_{r_0/4}(\overline{x})\cap\left(\Omega\setminus\partial\Omega(r_0)\right)$
and let $r_1=\frac{5}{4}r_0$. Then by the triangle inequality, there
holds  $dist(x_1,\partial\Omega)\leq r_1$. Thus
\begin{equation*}
\|w\|_{L^{\infty}(B_{r_1}(x_1))}\leq\|u\|_{L^{\infty}(\Omega)}e^{\sqrt{\lambda}r_1}.
\end{equation*}
On the other hand, we have
\begin{eqnarray*}
\|w\|_{L^{\infty}(B_{r_1/2}(x_1))}&\geq&\|w\|_{L^{\infty}(B_{r_0/4}(\overline{x}))}
\\&\geq&2^{-\bar{N}(\overline{x},r_0/2)}\|w\|_{L^{\infty}(B_{r_0/2}(\overline{x}))}
\\&=&2^{-C(\ln(\sqrt{\lambda}+\mu)-\ln\mu+\sqrt{\lambda}r_0)}
\sqrt{\mu}(\sqrt{\lambda}+\mu)^{-\frac{n+4}{2}}\|u\|_{L^{\infty}(\Omega)}
\\&=&2^{-C(\frac{n+6}{2}\ln(\sqrt{\lambda}+\mu)-\frac{3}{2}\ln\mu+\sqrt{\lambda}r_0)}\|u\|_{L^{\infty}(\Omega)}.
\end{eqnarray*}
Thus
\begin{equation*}
\bar{N}(x_1,r_1)\leq C(1+\epsilon)\left(\frac{n+6}{2}\ln(\sqrt{\lambda}+\mu)-\frac{3}{2}\ln\mu+\sqrt{\lambda}r_0\right)+C\sqrt{\lambda}r_1,
\end{equation*}
and then for any $r\leq r_1$,
\begin{equation*}
\bar{N}(x_1,r)\leq
C(1+\epsilon)^2\left(\frac{n+6}{2}\ln(\sqrt{\lambda}+\mu)-\frac{3}{2}\ln\mu+\sqrt{\lambda}r_0\right)
+C(1+\epsilon)\sqrt{\lambda}r_1.
\end{equation*}
Using the above arguments for $k$ times, such that
$(5/4)^{k-1}r_0<2R_0$ and $(5/4)^kr_0\geq 2R_0$, then
$k=C\ln(2R_0/r_0)=C(\ln(\sqrt{\lambda}+\mu)-\ln\mu)$. Then for some
point $x_k\in\Omega$ with
\begin{equation*}
dist(x_k,\partial\Omega)\geq r_k \geq R_0,
\end{equation*}
it holds that
\begin{eqnarray*}
\bar{N}(x_k,r_k)&\leq&C(1+\epsilon)^k\left(\frac{n+6}{2}\ln(\sqrt{\lambda}+\mu)-\frac{3}{2}\ln\mu\right)
\\&+&C\sqrt{\lambda}\left((1+\epsilon)^kr_0+(1+\epsilon)^{k-1}r_1+\cdots+(1+\epsilon)^{0}R_0\right)
\\&\leq&C\left(\frac{5}{4}\right)^{k\log_{5/4}(1+\epsilon)}\left(\ln(\sqrt{\lambda}+\mu)-\ln\mu\right)
\\&+&C\sqrt{\lambda}(1+\epsilon)^kr_0\left(1+\frac{5}{4(1+\epsilon)}+\left(\frac{5}{4(1+\epsilon)}\right)^2+\cdot
+\left(\frac{5}{4(1+\epsilon)}\right)^k\right)
\\&\leq&Cr_0^{-\log_{5/4}(1+\epsilon)}\left(\ln(\sqrt{\lambda}+\mu)-\ln\mu\right)+C\sqrt{\lambda}(1+\epsilon)^{k}r_0
\left(\frac{5}{4(1+\epsilon)}\right)^k
\\&\leq&C\left(\sqrt{\lambda}+\mu^{\alpha}+\mu^{-c\alpha}\right),
\end{eqnarray*}
where $c$ is a positive constant depending only on $n$.
In the above inequalities, $C$ may be different from line to line. Then for some point $\widetilde{x}\in \Omega\setminus\Gamma(R_0)$,
we have $N(\widetilde{x},2R_0)\leq
C\left(\sqrt{\lambda}+\mu+\ln^2\mu\right).$

For any point $x\in\Omega\setminus\Gamma(R_0)$, using the same
argument for $l$ times, but keeping the radius unchanged, where $l$
is a positive constant depending only on $n$ and $\Omega$, we can
obtain that
\begin{equation*}
N(x,R_0)\leq C\left(\sqrt{\lambda}+\mu^{\alpha}+\frac{1}{\mu^{c\alpha}}\right),
\end{equation*}
which is the desired result.
\end{proof}

\begin{lemma}\label{upper bound for doubling index near gamma}
Let $u$ be an eigenfunction on $\Omega$
and $\lambda$ is the corresponding eigenvalue. Moreover, we also assume that $\mu>0$ and $\lambda$ is large enough. Then for any $x\in\Gamma(R_0)\setminus\Gamma$ and $\overline{r}<dist(x,\Gamma)/2$, it holds that for any $\alpha\in(0,1)$,
\begin{equation}
N(x,\overline{r})\leq C\overline{r}^{-\frac{1}{2}}\left(\sqrt{\lambda}+\mu^{\alpha}+\mu^{-c\alpha}\right),
\end{equation}
where $R_0$ and $c$ are the same positive constants as in Lemma $\ref{upper bound for doubing index away gamma}$, $C$ is a positive constant depending on $n$, $\Omega$ and $\alpha$.
\end{lemma}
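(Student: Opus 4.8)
The plan is to control the doubling index near the non-analytic set $\Gamma$ by covering a neighborhood of $x$ with balls whose radii shrink as one approaches $\Gamma$, applying Lemma~\ref{upper bound for doubing index away gamma} (or rather its local version via Lemma~\ref{extending again}) on each ball, and then chaining the resulting estimates with the almost-monotonicity formula of Lemma~\ref{monotonicity formula for the doubling index}. Fix $x\in\Gamma(R_0)\setminus\Gamma$ and set $d=\mathrm{dist}(x,\Gamma)$. First I would cover the segment joining $x$ to a point $x_*$ at distance $\sim R_0$ from $\Gamma$ (where Lemma~\ref{upper bound for doubing index away gamma} already gives $N(x_*,R_0)\le C(\sqrt{\lambda}+\mu^{\alpha}+\mu^{-c\alpha})$) by a chain of balls $B_{\rho_j}(x_j)$, where $x_j$ moves away from $\Gamma$ and $\rho_j\sim \mathrm{dist}(x_j,\Gamma)/4$, so that consecutive balls overlap. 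The number of balls needed to pass from scale $\overline r$ up to scale $R_0$ is $k\sim\ln(R_0/\overline r)$, since the radii grow geometrically.

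Next, on each ball I would use Lemma~\ref{extending again}: since $B_{\rho_j}(x_j)$ stays at definite distance from $\Gamma$, $u$ extends analytically to a slightly larger ball with $\|u\|_{L^\infty(B_{2\rho_j}(x_j))}\le e^{C(\sqrt{\lambda}+\mu-\ln\mu-\ln\rho_j)}\|u\|_{L^2(\Omega)}$, while from below $\|u\|_{L^\infty(B_{\rho_j/2}(x_j))}$ is bounded from below in terms of $\|u\|_{L^\infty(B_{\rho_j/2}(x_{j-1})))}$ using the doubling index already obtained at the previous step. This yields a recursion of the shape $\bar N(x_j,\rho_j)\le (1+\epsilon)\bar N(x_{j+1},\rho_{j+1})+C(\sqrt{\lambda}\rho_{j+1}+\sqrt{\lambda}+\mu-\ln\mu-\ln\rho_{j+1})$, exactly as in the proof of Lemma~\ref{upper bound for doubing index away gamma}, where the $\log\rho_{j+1}$ term now also enters because we are shrinking, not enlarging, toward $\Gamma$. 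Summing the recursion over the $k$ steps, using $\sum_j (1+\epsilon)^j\le C(1+\epsilon)^k$ and choosing $\epsilon$ depending on $\alpha$ so that $(1+\epsilon)^k\le C\overline r^{-\beta}$ for a small $\beta$, one arrives at
\begin{equation*}
N(x,\overline r)\le C\,\overline r^{-\beta}\bigl(\sqrt{\lambda}+\mu^{\alpha}+\mu^{-c\alpha}\bigr)+C\,\overline r^{-\beta}\bigl(-\ln\overline r\bigr),
\end{equation*}
and since $\overline r^{-\beta}(-\ln\overline r)\le C_\beta\,\overline r^{-1/2}$ for $\beta<1/2$, absorbing the logarithm gives the stated bound $N(x,\overline r)\le C\overline r^{-1/2}(\sqrt{\lambda}+\mu^{\alpha}+\mu^{-c\alpha})$. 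Finally, Remark~\ref{relationship} lets me pass freely between $N$ and $\bar N$ at the cost of an additional $\sqrt{\lambda}\,\overline r\le\sqrt{\lambda}$ term, which is harmless.

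The main obstacle I expect is the bookkeeping of the exponent: one must verify that the geometric blow-up factor $(1+\epsilon)^k$ of the iteration, with $k\sim\ln(1/\overline r)$, really produces only a power $\overline r^{-1/2}$ and not a worse power, and that this is compatible with choosing $\epsilon$ small enough (in terms of $\alpha$) so that the $\mu$-dependent part stays of the claimed form $\sqrt{\lambda}+\mu^{\alpha}+\mu^{-c\alpha}$. In other words, the two small parameters $\epsilon$ (controlling the almost-monotonicity loss) and $\beta$ (the allowed power of $\overline r^{-1}$) must be balanced against $\alpha$; the point is that $\Gamma$ has codimension $2$, so although $N(x,\overline r)$ may blow up as $x\to\Gamma$, the blow-up rate $\overline r^{-1/2}$ is slow enough that later, when integrating the nodal estimate of Lemma~\ref{nodal estimate in small ball} over a covering of $\Gamma(R_0)$ by balls of radius comparable to their distance to $\Gamma$, the sum $\sum \overline r_i^{-1/2}\,\overline r_i^{\,n-1}$ over an $(n-2)$-dimensional set still converges. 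The rest of the argument is a routine adaptation of the proof of Lemma~\ref{upper bound for doubing index away gamma}.
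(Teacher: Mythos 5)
Your overall strategy --- a chain of balls whose radii shrink geometrically from scale $R_0$ down to scale $\overline r$, iterating the almost-monotonicity of Lemma \ref{monotonicity formula for the doubling index} so that the accumulated loss $(1+\epsilon)^k$ with $k\sim\ln(R_0/\overline r)$ is exactly the source of the factor $\overline r^{-1/2}$ --- is the same as the paper's. But there is a concrete gap in the recursion you write down. You take the per-step additive error from Lemma \ref{extending again}, so your recursion reads
\begin{equation*}
\bar N(x_j,\rho_j)\le (1+\epsilon)\bar N(x_{j+1},\rho_{j+1})+C\bigl(\sqrt{\lambda}\rho_{j+1}+\sqrt{\lambda}+\mu-\ln\mu-\ln\rho_{j+1}\bigr).
\end{equation*}
The additive term here is \emph{linear} in $\mu$, and it is injected at every one of the $k$ steps. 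Unrolling the recursion gives a contribution $\sum_{j}(1+\epsilon)^{j}\,C(\sqrt{\lambda}+\mu+\cdots)\gtrsim (1+\epsilon)^k\,\mu\sim \overline r^{-\beta}\mu$, i.e.\ a full power of $\mu$ multiplied by the blow-up factor. Your displayed conclusion silently replaces this by $\mu^{\alpha}$; that replacement does not follow from the recursion as stated, and no choice of $\epsilon$ or $\beta$ repairs it, because the linear-in-$\mu$ term enters additively at each step rather than only through the exponent of the multiplicative factor.

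The paper avoids this by making the $\mu$-dependence enter \emph{only once}, through the initial datum $\bar N(x_0,r)\le C(1+\epsilon)(\sqrt{\lambda}+\mu^{\alpha}+\mu^{-c\alpha})$ supplied by Lemma \ref{upper bound for doubing index away gamma}. At each subsequent step the upper bound on $\|w\|_{L^{\infty}(B_{R_j}(x_j))}$ is taken to be the crude global bound $\|u\|_{L^{\infty}(\Omega)}e^{\sqrt{\lambda}R_j}$ (not the extension estimate of Lemma \ref{extending again}), while the lower bound on the half-ball is propagated from the previous step relative to the \emph{same} reference quantity $\|u\|_{L^{\infty}(\Omega)}$. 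Hence the per-step additive error is only $C\sqrt{\lambda}R_j$, and since $R_j=(3/4)^jR_0$ decays faster than $(1+\epsilon)^{-j}$ grows, these errors sum to $C\sqrt{\lambda}(1+\epsilon)^kR_0\lesssim C\sqrt{\lambda}\,\overline r^{-1/2}$. Note also that in this lemma $\epsilon$ is a \emph{fixed} numerical constant determined by $\log_{3/4}(1+\epsilon)=-\tfrac12$; the $\alpha$-dependence is already contained in the input from Lemma \ref{upper bound for doubing index away gamma}, so your proposed balancing of $\epsilon$ against $\alpha$ at this stage is not needed. To repair your argument, replace the use of Lemma \ref{extending again} inside the iteration by the comparison against $\|u\|_{L^{\infty}(\Omega)}$ just described; the rest of your outline (the count $k\sim\ln(1/\overline r)$, the absorption of the logarithm into $\overline r^{-1/2}$, and the passage from $\bar N$ to $N$ via Remark \ref{relationship}) then goes through as in the paper.
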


\begin{proof}
Also let $w(x,x_{n+1})=u(x)e^{\sqrt{\lambda}x_{n+1}}$.
By Lemma $\ref{upper bound for doubing index away
gamma}$ and Lemma $\ref{monotonicity formula for the doubling index}$, we know that for any
$x_0\in\Omega\setminus\partial\Omega(R_0)$ and $r<R_0$, there holds
\begin{equation*}
\bar{N}(x_0,r)\leq
C(1+\epsilon)\left(\sqrt{\lambda}+\mu^{\alpha}+\mu^{-c\alpha}\right).
\end{equation*}
Choose $\epsilon>0$ satisfies that $\log_{3/4}(1+\epsilon)=-\frac{1}{2}$.

Because
\begin{equation*}
\|w\|_{L^{\infty}(B_{R_0/16}(x_0))}\geq\|u\|_{L^{\infty}(B_{R_0/16}(x_0))}\geq 2^{-C(\sqrt{\lambda}+\mu^{\alpha}+\mu^{-c\alpha})}\|u\|_{L^{\infty}(\Omega)}.
\end{equation*}
So for any $x_1\in B_{R_0/4}(x_0)\cap\partial\Omega(R_0)$, it holds
that $dist(x_1,\partial\Omega)\leq\frac{3}{4}R_0$. Let
$R_1=\frac{3}{4}R_0$, then
\begin{equation*}
\|w\|_{L^{\infty}(B_{R_1}(x_1))}\leq\|u\|_{L^{\infty}(\Omega)}e^{\sqrt{\lambda}R_1}
\end{equation*}
and
\begin{equation*}
\|w\|_{L^{\infty}(B_{R_1/2}(x_1))}\geq\|w\|_{L^{\infty}(B_{R_0/16}(x_0))}\geq 2^{-C(\sqrt{\lambda}+\mu^{\alpha}+\frac{1}{\mu^{c\alpha}})}\|u\|_{L^{\infty}(\Omega)}.
\end{equation*}
So
\begin{equation*}
\bar{N}(x_1,R_1)\leq
C(1+\epsilon)\left(\sqrt{\lambda}+\mu^{\alpha}+\mu^{-c\alpha}\right)+C\sqrt{\lambda}R_1,
\end{equation*}
and furthermore for any $r\leq R_1$, we have
\begin{equation*}
\bar{N}(x_1,R_1/2)\leq
C(1+\epsilon)^2\left(\sqrt{\lambda}+\mu^{\alpha}+\mu^{-c\alpha}\right)+C(1+\epsilon)\sqrt{\lambda}R_1,
\end{equation*}
and
\begin{equation*}
\|w\|_{L^{\infty}(B_{R_1/16}(x_1))}\geq 2^{-C(1+\epsilon)^2(\sqrt{\lambda}+\mu^{\alpha}+\mu^{-c\alpha})-C(1+\epsilon)\sqrt{\lambda}R_1)}
\|u\|_{L^{\infty}(\Omega)}.
\end{equation*}
By doing this $k$ times such that
$R_k=(3/4)^{k-1}R_0>2\overline{r}$ and $R_k=(3/4)^kR_0\leq2\overline{r}$,
i.e., $k=-C\ln\overline{r}$, we have
\begin{eqnarray*}
\bar{N}(x_k,R_k)&\leq&C(1+\epsilon)^k\left(\sqrt{\lambda}+\mu^{\alpha}+\mu^{-c\alpha}\right)
\\&+&C\sqrt{\lambda}\left((1+\epsilon)^kR_0+(1+\epsilon)^{k-1}R_1+\cdots+(1+\epsilon)^0R_k)\right)
\\&\leq&C\overline{r}^{-\frac{1}{2}}\left(\sqrt{\lambda}+\mu^{\alpha}+\mu^{-c\alpha}\right)
\\&+&C\sqrt{\lambda}(1+\epsilon)^kR_0\left(1+\frac{3}{4(1+\epsilon)}+\left(\frac{3}{4(1+\epsilon)}\right)^2
+\cdots+\left(\frac{3}{4(1+\epsilon)}\right)^k\right)
\\&\leq&C\overline{r}^{-\frac{1}{2}}\left(\sqrt{\lambda}+\mu^{\alpha}+\mu^{-c\alpha}\right)
+C\sqrt{\lambda}R_0(1+\epsilon)^k
\\&\leq&C\overline{r}^{-\frac{1}{2}}\left(\sqrt{\lambda}+\mu^{\alpha}+\mu^{-c\alpha}\right).
\end{eqnarray*}
for any $r<\overline{r}$. Then by repeating the same argument for
$l$ times, where $l$ depends only on $n$ and $\Omega$, and keeping
the radius unchanged, we have that, for any
$x\in\Omega\setminus\Gamma(2\overline{r})$, the
following inequality holds:
\begin{equation*}
\bar{N}(x,2\overline{r})\leq
C\overline{r}^{-\frac{1}{2}}\left(\sqrt{\lambda}+\mu^{\alpha}+\mu^{-c\alpha}\right),
\end{equation*}
where $C$ is a positive constant depending only on $n$, $\Omega$ and $\alpha$ and $c$ is a positive constant depending only on $n$, provided that $\lambda$ is large enough.
Then from Remark $\ref{relationship}$, we can get the desired result.
\end{proof}

With the above preparation, we can get the conclusion of Theorem
$\ref{another main theorem}$.

\textbf{Proof of Theorem \ref{another main theorem}:}

By Lemmas $\ref{nodal estimate in small ball}$,  $\ref{upper bound
for doubing index away gamma}$ and $\ref{upper bound for doubling
index near gamma}$, we have that
\begin{equation}\label{3.10}
\begin{array}
{l} \mathcal{H}^{n-1}\left(\left\{x\in\Omega|u(x)=0\right\}\right)
 \leq \mathcal{H}^{n-1}\left(\left\{x\in\Omega\setminus\Gamma(R_0)|u(x)=0\right\}\right)
\\ \quad\quad\quad\quad\quad\quad\quad\quad\quad\quad + \sum\limits_{k=0}^{\infty}\mathcal{H}^{n-1}
\left(\left\{x\in\Gamma(\frac{R_0}{2^k})\setminus\Gamma(\frac{R_0}{2^{k+1}})|u(x)=0\right\}\right)
\\ \quad\quad\quad\quad\quad\quad\quad\quad\quad\quad+ \mathcal{H}^{n-1}\left(\left\{x\in\Gamma|u(x)=0\right\}\right).
\end{array}
\end{equation}
By Lemma $\ref{nodal estimate in small ball}$ and Lemma $\ref{upper
bound for doubing index away gamma}$, there holds
\begin{equation}\label{3.11}
\begin{array}{l}
\mathcal{H}^{n-1}\left(\left\{x\in\Omega\setminus\Gamma(R_0)|u(x=0)\right\}\right)
 \leq
C\left(\sqrt{\lambda}+\mu^{\alpha}+\mu^{-c\alpha}\right)R_0^{n-1}\frac{1}{R_0^n}
\\ \quad\quad\quad\quad\quad\quad\quad\quad\quad\quad\quad\quad\quad\leq C\left(\sqrt{\lambda}+\mu^{\alpha}+\mu^{-c\alpha}\right).
\end{array}
\end{equation}
By Lemma $\ref{nodal estimate in small ball}$ and Lemma $\ref{upper
bound for doubling index near gamma}$, we have that
\begin{equation}\label{3.12}
\begin{array}{l}
\sum\limits_{k=0}^{\infty}\mathcal{H}^{n-1}
\left(\left\{x\in\Gamma(\frac{R_0}{2^k})\setminus\Gamma(\frac{R_0}{2^{k+1}})|u(x)=0\right\}\right)
 \leq  \sum\limits_{k=0}^{\infty}C\left(\sqrt{\lambda}+\mu^{\alpha}+\mu^{-c\alpha}\right)\left(\frac{R_0}
 {2^{k}}\right)^{-\frac{1}{2}}
\\ \quad\quad\quad\quad\quad\quad\quad\quad\quad\quad\quad\quad\quad\quad\quad\quad\times
\left(\frac{R_0}{2^k}\right)^{n-1}\frac{1}{\left(\frac{R_0}{2^k}\right)^{n-2}}
\\ \quad\quad\quad\quad\quad\quad\quad\quad\quad\quad\quad\quad\quad\quad\quad\quad\leq C\sum\limits_{k=0}^{\infty}\frac{1}{\sqrt{2}^k}(\sqrt{\lambda}+\mu^{\alpha}+\mu^{-c\alpha})
\\\quad\quad\quad\quad\quad\quad\quad\quad\quad\quad\quad\quad\quad\quad\quad\quad \leq C(\sqrt{\lambda}+\mu^{\alpha}+\mu^{-c\alpha}).
\end{array}
\end{equation}
Because $\Gamma$ is a union of some $(n-2) $ dimensional
submanifolds of $\partial\Omega$, we have
\begin{equation}\label{3.13}
\mathcal{H}^{n-1}\left(\left\{x\in\Gamma|u(x)=0\right\}\right)\leq\mathcal{H}^{n-1}\left(\Gamma\right)
=0.
\end{equation}
Substituting (\ref{3.11})-(\ref{3.13}) into (\ref{3.10}), we have
that
\begin{equation*}
\mathcal{H}^{n-1}\left(\left\{x\in\Omega|u(x)=0\right\}\right)\leq
C\left(\sqrt{\lambda}+\mu^{\alpha}+\mu^{-c\alpha}\right),
\end{equation*}
where $C$ is a positive constant depending only on $n$, $\Omega$, $\Gamma$ and $\alpha$ and $c$ is a positive constant depending only on $n$, provided that $\lambda$ is large enough. That is the desired result.

\qed

\newpage

\end{document}